\documentclass[11pt, letterpaper,final, reqno]{amsart}
\usepackage{amsfonts,amsmath, amsthm, amssymb, latexsym, epsfig,  bbm, mathtools}
\usepackage[english]{babel}
\usepackage[utf8]{inputenc}
\usepackage[all]{xy}
\usepackage{xspace}
\usepackage{comment}
\usepackage{setspace}
\usepackage{enumerate}
\usepackage{stmaryrd}
\usepackage{xcolor}
\usepackage{hyperref}
\usepackage[mathscr]{eucal}
\usepackage[notcite,notref]{showkeys}

	\newcommand{\ZZ}{\mathbb{Z}}
	
	\newcommand{\QQ}{\mathbb{Q}}
	
	\newcommand{\NN}{\mathbb{N}}
	\newcommand{\TT}{\mathbb{T}}

	\newcommand{\Kk}{\mathcal{K}}

    \newcommand{\Mm}{\mathcal{M}}
		
	\newtheorem{thm}{Theorem}[section]
	\newtheorem{cor}[thm]{Corollary}
	\newtheorem{lem}[thm]{Lemma}

	\theoremstyle{definition}
	\newtheorem{dfn}[thm]{Definition}

	\theoremstyle{remark}
	\newtheorem{rmk}[thm]{Remark}

    \numberwithin{equation}{section}

    \newcommand{\lsp}{\operatorname{span}}
    \newcommand{\clsp}{\operatorname{\overline{span}}}
    \newcommand{\supp}{\operatorname{supp}}

\definecolor{aspurple}{rgb}{0.4,0.05,0.35}

\newcommand{\topo}{topological}
\newcommand{\smtx}[1]{\left(\begin{smallmatrix} #1
\end{smallmatrix}\right)}
\newcommand{\wilde}{\widetilde}
\newcommand{\<}{\langle}
\renewcommand{\>}{\rangle}
\newcommand{\pre}[1]{{}_{#1}}
\newcommand{\ibm}{imprimitivity bimodule}

\newcommand{\midtext}[1]{\quad\text{#1}\quad}

\newcommand{\deltag}{\delta_\Gamma}
\newcommand{\id}{\text{id}}
\newcommand{\cstg}{\ensuremath{C^*(\Gamma)}}

\newcommand{\csta}{\ensuremath{C^*}-algebra}
\newcommand{\inv}{^{-1}}
\newcommand{\xt}{\otimes}

\newcommand{\CE}{conditional expectation}
\newcommand{\rep}{representation}

\newcommand{\sat}{saturated }

\begin{document}
	\title{Morita Equivalence of Graded $C^*$-algebras}

    \author[J.~Quigg]{John Quigg}
        \address{School of Mathematical and Statistical Sciences \\ Arizona State University \\ Tempe, Arizona 85287 USA}
        \email{quigg@asu.edu}

    \author[E.~Ruiz]{Efren Ruiz}
        \address{Department of Mathematics\\University of Hawaii,
Hilo\\200 W. Kawili St.\\
Hilo, Hawaii\\
96720-4091 USA}
        \email{ruize@hawaii.edu}

        \author[A.~Sims]{Aidan Sims}

\address[A. Sims]{School of Mathematics and Statistics\\
University of New South Wales\\
Sydney NSW  2052\\
Australia}
\email[A. Sims]{aidan.sims@unsw.edu.au}

               \date{\today}
	\subjclass{46L05, 46L35, 46L80}

\thanks{This work was initiated while Ruiz and Sims were attending the workshop \emph{Cartan Subalgebras in Operator Algebras, and Topological Full Groups} (\# 24w5175) and completed while Ruiz and Sims attended the workshop in MATRIX \emph{Combinatorial *-Algebras}.  This work has benefited from research visits of Ruiz (Arizona State University and University of New South Wales) and Sims (University of Hawai'i at Hilo).  We thank all institutions for their hospitality.  In particular, Ruiz thanks Jack Spielberg for hosting his visit at Arizona State University.  This work was supported by ARC Discovery Project DP220101631 and by an NSF grant DMS 2452325.}
\begin{abstract}
We define two notions of Morita equivalence for graded $C^*$-algebras (graded Morita equivalence and homogeneous Morita equivalence) and provide Brown--Green--Rieffel Stabilization Type Theorems for both notions of graded equivalence.  We apply our results to finite regular graphs by establishing an explicit connection between graded $C^*$-algebras and coactions.  Lastly, we incorporate Cartan subalgebras with totally disconnected spectra and obtain Brown--Green--Rieffel Stabilization Type Theorems for these cases.
\end{abstract}

\maketitle

\section{Introduction}

We study Morita equivalence of \csta s $A$ that are graded over a discrete group $\Gamma$.
We insist throughout that the grading be topological in Exel's sense,
namely there should exist a compatible conditional expectation
onto the unit fibre $A_e$,
and we sometimes want gradings that are \emph{strong}
in the sense of having a compatible conditional expectation
and the (closed spans of) products of
fibres $A_g$ and $A_h$ give all of $A_{gh}$.

Given two graded \csta s, we consider appropriately graded \ibm s,
and give conditions allowing us to prove two ``graded versions'' of the Brown--Green--Rieffel Stabilization theorem. Write $\Kk$ for the algebra of compact operators on the separable Hilbert space with orthonormal basis indexed by the group over which $A$ is graded. In our proofs, we consider two gradings on the stabilization $A\xt \Kk$ of $A$: the ``univalent'' grading
with subspaces $(A\xt \Kk)_g=A_g\xt \Kk$; and the ``bivalent'' grading with subspaces
and $(A\xt \Kk)_h=\clsp_{g,k}\{A_{ghk\inv}\xt \mathbf{e}_{g,k}\}$, respectively,
in our proofs.
If the gradings come from coactions, these results give ``dual'' (i.e., for coactions rather than actions) versions of Combes' equivariant Brown--Green--Rieffel theorem.

Gradings and coactions are also intimately tied to Fell bundles.
In particular, every graded \csta\ is generated by a faithful \rep\ of an associated Fell bundle.
Although the relationship between topological gradings and coactions is not perfect in general, there is no problem for amenable $\Gamma$,
and we exploit this as follows:
given coactions $(A_i,\delta_i)$ for $i=1,2$ and a graded $A_1$--$A_2$ Morita equivalence,
we show that the coactions are Morita equivalent\footnote{and vice-versa}
by using the associated
Fell bundle on the linking algebra to produce a coaction,
then extracting a $\delta_1$--$\delta_2$ Morita equivalence from its upper right-hand corner.

We make the connection with coactions explicit so that we can apply our results to finite regular graphs, using gauge actions.
We finish by showing how to incorporate Cartan subalgebras with totally disconnected spectra.

\section{Graded C*-algebras}

\begin{dfn}
Let $A$ be a $C^*$-algebra and let $\Gamma$ be a discrete group.  A \emph{$\Gamma$-grading} of $A$ is a collection $(A_g)_{g \in \Gamma}$ of closed linear subspaces of $A$ satisfying the following:
\begin{enumerate}
\item $A_g A_h \subseteq A_{gh}$;

\item $A_g^*= A_{g^{-1}}$;  and

\item $A$ is densely spanned by $\bigcup_{g \in \Gamma} A_g$.
\end{enumerate}

A $C^*$-algebra with a $\Gamma$-grading is called a \emph{$\Gamma$-graded $C^*$-algebra}, or just a \emph{graded $C^*$-algebra}.
\end{dfn}

\begin{dfn}
Let $A$ be a $C^*$-algebra and let $\Gamma$ be a discrete group.   A $\Gamma$-grading of $A$, $(A_g)_{g \in \Gamma}$,
\begin{enumerate}
\item is \emph{topological} if there exists a conditional expectation from $A$ to $A_{e}$ vanishing on all $A_g$ for all $g \neq e$ and

\item is \emph{strong} if $A_g A_h$ is dense in $A_{gh}$ for all $g, h \in \Gamma$.
\end{enumerate}

A $C^*$-algebra with a $\Gamma$-grading that is topological is called a \emph{topologically graded $C^*$-algebra}.  A $C^*$-algebra with a $\Gamma$-grading that is strong and topological is called a \emph{\sat graded $C^*$-algebra}. %\jq{I recommend using a different term for gradings that are both strong and topological; using ``strong'' again introduces a confusing ambiguity.}
\end{dfn}

\begin{lem}\label{lem-strgr-approx-id}
If $A$ is a $\Gamma$-graded $C^*$-algebra, then every approximate identity for $A_e$ is an approximate identity for $A$.
\end{lem}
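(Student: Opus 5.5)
Let $(e_\lambda)$ be an approximate identity for $A_e$. Since $\bigcup_g A_g$ densely spans $A$ and $\|e_\lambda\|\le 1$, an $\varepsilon/3$ argument reduces the claim to showing $e_\lambda a\to a$ and $a e_\lambda\to a$ for each $a\in A_g$ and each $g\in\Gamma$. Indeed, given $x\in A$ and $\varepsilon>0$, choose a finite sum $y=\sum a_i$ with $a_i\in A_{g_i}$ and $\|x-y\|<\varepsilon$. Then $\|e_\lambda x - x\|\le 2\|x-y\| + \sum_i\|e_\lambda a_i - a_i\|$, and the right-hand side is eventually less than $3\varepsilon$. Taking adjoints handles right multiplication, since $A_g^*=A_{g^{-1}}$ and $e_\lambda^*=e_\lambda$ (we may assume the approximate identity is self-adjoint, or simply treat both sides directly in the same way).

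Now fix $a\in A_g$. The key observation is that $aa^*\in A_gA_{g^{-1}}\subseteq A_e$ and $a^*a\in A_e$. We then use the standard $C^*$-estimate
\[
\|e_\lambda a - a\|^2 = \|(e_\lambda a - a)(e_\lambda a-a)^*\| = \|e_\lambda aa^*e_\lambda - e_\lambda aa^* - aa^*e_\lambda + aa^*\|.
\]
Each of $e_\lambda aa^*e_\lambda$, $e_\lambda aa^*$ and $aa^*e_\lambda$ converges to $aa^*$, because $aa^*\in A_e$ and $(e_\lambda)$ is a bounded approximate identity for $A_e$. Here, for the first term, we write $e_\lambda aa^*e_\lambda-aa^* = e_\lambda(aa^*e_\lambda - aa^*) + (e_\lambda aa^*-aa^*)$. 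Hence $\|e_\lambda a-a\|\to 0$. The symmetric computation with $(ae_\lambda - a)^*(ae_\lambda-a)$ and $a^*a\in A_e$ gives $ae_\lambda\to a$.

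There is no serious obstacle. The only points needing care are using the $C^*$-identity to pass from $a\in A_g$ to the element $aa^*$ of the unit fibre, and using uniform boundedness of the approximate identity in the density argument. Note that neither topological nor strong gradings are needed, only axioms (1)--(3).
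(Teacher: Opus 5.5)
Your proof is correct and is essentially the paper's argument: reduce by density to a homogeneous $a\in A_g$, then apply the $C^*$-identity to $(e_\lambda a-a)(e_\lambda a-a)^*$ and use $aa^*\in A_e$. You also write out the $\varepsilon/3$ density step and the right-sided convergence via $a^*a\in A_e$, both of which the paper leaves implicit.
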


\begin{proof}
Fix an approximate identity $(e_\lambda)_{\lambda \in \Lambda}$ for $A_e$. Since $A$ is graded, it suffices to show that if $g \in \Gamma$ and $b \in A_g$, then $e_\lambda b \to b$. For this, note that $bb^* \in A_e$, and so
\begin{align*}
\|e_\lambda b-b\|^2
&=\|(e_\lambda b-b)(e_\lambda b-b)^*\|
\\&=\|e_\lambda bb^*e_\lambda-e_\lambda bb^*-bb^*e_\lambda+bb^*\|
%\\&
\xrightarrow{\lambda}0.\qedhere
\end{align*}
\end{proof}

We introduce a relation between graded \csta s that is quite similar to what Muhly--Williams would call an \emph{equivalence} between the associated Fell bundles (see \cite[Section~6]{mw}).

\begin{dfn}[Graded equivalence]\label{dfn:graded equivalence}
Let $A$ and $B$ be topologically $\Gamma$-graded $C^*$-algebras.  An $A$--$B$-bimodule $X$ is \emph{graded} if $X$ is a left Hilbert $A$-module and a right Hilbert $B$-module, and there are closed linear subspaces $(X_g)_{g \in \Gamma}$ of $X$ such that
\begin{enumerate}
\item\label{it:graded actions} $A_{g} X_h B_{g'} \subseteq X_{ghg'}$ for all $g, h, g' \in \Gamma$;
\item for all $g,h\in \Gamma$,
\[
    _A\langle X_g,X_h\rangle \subset A_{gh\inv}\midtext{and}\langle X_g,X_h\rangle_B\subset B_{g\inv h};
\]
and
\item $X$ is densely spanned by $\bigcup_{g \in \Gamma} X_g$.
\end{enumerate}
We say that $A$ and $B$ are \emph{graded Morita equivalent} if there exists an $A$--$B$-imprimitivity bimodule $X$ that is graded.
\end{dfn}

\begin{rmk}\label{rmk-lin-indep}
If $X$ is graded as above, then the homogeneous subspaces $X_g$ are linearly independent. To see this, $E \colon B \to B_e$ is the conditional expectation of the topologically graded $C^*$-algebra $B$, and suppose that $\sum_g x_g = 0$ with each $x_g \in X_g$. Then for any $g \in G$ we have $0 = E(\langle x_g, \sum_{h \in G} x_h\rangle_B) = \langle x_g, x_g\rangle_B$.
\end{rmk}

We now show that there is an induced topological grading on the linking algebra (see \cite[page~50]{RW-Morita-Eq}) of a graded imprimitivity bimodule.

\begin{lem}\label{lem-linkalg-grd}
Let $A_1$ and $A_2$ be topologically $\Gamma$-graded $C^*$-algebras and let $X$ be a graded $A_1$--$A_2$-imprimitivity bimodule.  Then there exists a contractive linear map $E_0 \colon X \to X_{e}$ that pointwise fixes $X_e$ and annihilates $X_g$ for $g \not= e$. The map $E \colon L(X) \to L(X)_e$ defined by
\[
E\left( \begin{pmatrix} a & x \\ \widetilde{y} & b \end{pmatrix}\right)
    = \begin{pmatrix} E_{A_1}(a) & E_0(x) \\ E_0(y)\widetilde{\;} & E_{A_2}(b) \end{pmatrix}
\]
is a conditional expectation on the linking algebra $L(X)$.  Consequently, $L(X)$ is a topologically $\Gamma$-graded $C^*$-algebra with graded subspaces given by
\[
L(X)_g = \Big( \begin{matrix} A_g & X_g \\ {(X_{g^{-1}})\widetilde{\;}} & B_g\end{matrix}\Big).
\]
\end{lem}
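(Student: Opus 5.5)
The plan is to build $E_0$ from the conditional expectation $E_{A_2}$ via the right inner product, and then obtain $E$ by compressing a faithful conditional expectation on a larger algebra.

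\textbf{Defining $E_0$.} On the dense subspace $X_0=\lsp\bigcup_g X_g$, Remark~\ref{rmk-lin-indep} shows every element has a unique decomposition $x=\sum_g x_g$ with $x_g\in X_g$. Set $E_0(x)=x_e$. We must show $\|x_e\|\le\|x\|$, and then extend by continuity. Since $\langle X_g,X_h\rangle_{A_2}\subset (A_2)_{g\inv h}$, we have
\[
E_{A_2}(\langle x,x\rangle_{A_2})=\sum_g \langle x_g,x_g\rangle_{A_2}\ \ge\ \langle x_e,x_e\rangle_{A_2}.
\]
Taking norms and using that $E_{A_2}$ is contractive gives $\|x_e\|^2\le \|x\|^2$. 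The extension $E_0$ is then contractive and linear, fixes $X_e$, and kills each $X_g$ with $g\ne e$. These properties hold on each $X_g$ directly and pass to closures.

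\textbf{Defining $E$.} Define $E$ on $L(X)$ by the displayed formula; it is well defined because $y\mapsto\widetilde y$ is a bijection. First check that $L(X)_g$ is a $\Gamma$-grading of $L(X)$. Matrix multiplication together with axioms (1)–(2) of Definition~\ref{dfn:graded equivalence} gives $L(X)_gL(X)_h\subseteq L(X)_{gh}$. Here one uses ${}_{A_1}\langle x,y\rangle = x\widetilde y$ with $x\in X_g$, $y\in X_{h^{-1}}$, which lands in $A_{gh}$, and $\widetilde y\, x=\langle y,x\rangle_{A_2}$ lies in $B_{gh}$. For the adjoint, $\begin{pmatrix}0&x\\0&0\end{pmatrix}^*=\begin{pmatrix}0&0\\\widetilde x&0\end{pmatrix}$, so $L(X)_g^*=L(X)_{g\inv}$. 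Density of the span is clear. Next, $E$ is idempotent with range $L(X)_e$, and it is continuous: each entry map is contractive and the norm on $L(X)$ is dominated by a constant times the sum of the entry norms. It vanishes on $L(X)_g$ for $g\neq e$.

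\textbf{$E$ is a conditional expectation.} By Tomiyama, it suffices to show $E$ is a contractive projection onto $L(X)_e$. This is the main obstacle: contractivity in the matrix norm is not obvious entrywise. I would try two routes. The first is to verify that $E$ is $L(X)_e$-bimodular and positive. Bimodularity reduces, by continuity and linearity, to homogeneous elements, where both $E(cd)$ and $cE(d)$ vanish unless the grades match, by the linear independence used above. Positivity is the harder part. For that I would instead realize $E$ as a compression, which is the second and preferred route. Embed $L(X)$ in $\mathcal L(X\oplus A_2)$ and use the topological grading via a coaction-like unitary trick: form $L(X)\otimes C^*_r(\Gamma)$ and use the map $a_g\mapsto a_g\otimes\lambda_g$. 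To avoid needing this to be a homomorphism on all of $L(X)$, I would instead use the Fell bundle picture. The grading of $L(X)$ defines a Fell bundle $\mathcal B$, and $E$ restricted to the algebraic span agrees with the canonical expectation of $C^*_r(\mathcal B)$ composed with the quotient map from $L(X)$.

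If that becomes too heavy, fall back to a direct positivity check on the dense span. For $z=\sum_g z_g$, write $z^*z=\sum_{g,h} z_g^*z_h$; then
\[
E(z^*z)=\sum_g z_g^*z_g\ge 0.
\]
This uses linear independence of the $L(X)_g$, which follows from the independence of the $A_g$, the $X_g$, and the $\widetilde X_g$ entrywise. Positivity plus idempotence and bimodularity gives complete positivity via the standard $2\times2$ matrix argument. Hence $\|E\|=\|E(1)\|\le1$ using approximate units (Lemma~\ref{lem-strgr-approx-id}), and continuity from the first route extends everything to $L(X)$. This yields the conditional expectation and hence the topological grading.
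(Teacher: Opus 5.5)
Your fallback argument is a complete proof, but the route you label as preferred should be dropped. There is no a priori $*$-homomorphism $L(X)\to C^*_r(\mathcal B)$ that is the identity on the fibres. By Exel's theory such a map exists precisely when $L(X)$ is topologically graded, so using it to produce $E$ assumes what you want to prove. It can be built by hand from the maps $A_i\to C^*_r$ of the corner bundles together with $\|x\|^2=\|\langle x,x\rangle_{A_2}\|$, but you have not done that work.

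The fallback takes a genuinely different route from the paper. The paper constructs $E_0$ exactly as you do. It then proves contractivity of $E$ by a direct norm estimate: it writes $E(A)$, acting on $X_e\oplus(A_2)_e$, as $(E_0\oplus E_{A_2})\circ m_A\circ\iota$. It shows $\|E_0\oplus E_{A_2}\|\le1$ using $\langle E_0(z),E_0(z)\rangle_{A_2}\le E_{A_2}(\langle z,z\rangle_{A_2})$, and then invokes Tomiyama.

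You instead use only the crude bound $\|E\|\le4$ coming from the entrywise definition. With it you extend $E(z^*z)=\sum_g z_g^*z_g\ge0$ and $L(X)_e$-bimodularity from the algebraic span by continuity. You then get $\|E\|\le1$ from positivity, bimodularity, and an approximate unit of $L(X)_e$, which is one for $L(X)$ by Lemma~\ref{lem-strgr-approx-id}.

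Your route is more general. It shows that on any $\Gamma$-graded $C^*$-algebra, a bounded idempotent onto the unit fibre that kills the other fibres is automatically a conditional expectation. It also never needs to identify the norm of $E(A)$ with its norm on a submodule.

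The paper's estimate only bounds the norm of $E(A)$ as an operator on $X_e\oplus(A_2)_e$. That equals its norm in $L(X)$ only if $L(X)_e$ acts faithfully there. This is not automatic when $X_e$ is not an $(A_1)_e$--$(A_2)_e$ imprimitivity bimodule. For instance, take $\Gamma=\ZZ/2\ZZ$, $A_1=A_2=\mathbb{C}$ concentrated in degree $e$, and $X=\mathbb{C}$ concentrated in the non-trivial degree. Then $X$ is a graded imprimitivity bimodule with $X_e=0$, and $X_e\oplus(A_2)_e$ does not see the $(1,1)$-entry of $L(X)_e$.

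When that faithfulness holds, as in the homogeneous setting, the paper's argument has the merit of being a direct Hilbert-module norm estimate, with no appeal to positivity or complete positivity.
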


\begin{proof}
A computation shows that $L(X)$ is a $\Gamma$-graded $C^*$-algebra.  Thus, we are left to show the existence of a contractive linear map $E_0 \colon X \to X_{e}$ such that the map $E \colon L(X) \to L(X)_e$ defined in the lemma is a conditional expectation.

Since the $X_g$ are linearly independent, there is a unique linear map $E_0 \colon \operatorname{span} \big(\bigcup_g X_g\big) \to X_e$ such that $E_0|_{X_e} = \operatorname{id}_{X_e}$ and $E_0(X_g) =  \{0\}$ for $g \not= e$.

We show that $E_0$ is norm-decreasing. Fix a finite linear combination $x = \sum_ g x _g$ with each $x_g \in X_g$. Since $\langle x_g, x_g \rangle_{A_2} \geq 0$ for all $g$,
\begin{align}
0 \leq \langle E_0(x), E_0(x) \rangle_{A_2} &= \langle x_e, x_e \rangle_{A_2} \nonumber\\
							&\leq \sum_{ g } \langle x_g, x_g \rangle_{A_2} \nonumber\\
							&= \sum_{g, h } E_{A_2} (\langle x_g, x_h \rangle_{A_2} ) \nonumber\\
							&= E_{A_2} ( \langle x , x \rangle_{A_2} ).\label{eq:estimate}
\end{align}
Thus, $\| \langle E_0(x), E_0(x) \rangle_{A_2}  \| \leq \| E_{A_2} ( \langle x , x \rangle_{A_2} ) \| \leq \|  \langle x , x \rangle_{A_2} \|$ which proves that $E_0$ is norm-decreasing.  Hence $E_0$ extends uniquely to a contractive linear map $E_0 : X = \clsp \bigcup_g X_g \to X_e$ which we again denote by $E_0$. For $x \in X$, continuity and the computation~\eqref{eq:estimate} give $0 \leq \langle E_0(x), E_0(x) \rangle_{A_2} \leq E_{A_2}( \langle x , x \rangle_{A_2})$.

Let $E \colon L(X) \to L(X)_e$ be as in the statement of the lemma. Then $E$ is a linear map that pointwise fixes $L(X)_e$ and annihilates $L(X)_g$ for all $g \neq e$.  We show that $E$ is norm decreasing.  Consider an element $A = \left( \begin{smallmatrix} a & x \\ \widetilde{y} & b \end{smallmatrix}\right)$ of $L(X)$.  Then $E(a)$, regarded as an element of $\mathcal{L}( X_e \oplus (A_2)_e)$, is the map
\[
 \begin{pmatrix} z \\ c \end{pmatrix} \in X_e \otimes (A_2)_e \mapsto \begin{pmatrix} E_{A_1}(a) z + E_0(x) c \\ \langle E_0(y) , z \rangle_{A_2} + E_{A_2}(b)c \end{pmatrix}.
\]
By approximating $a, b, x, y$ by linear combinations of homogeneous elements and using that $E_{A_2}$ is a conditional expectation, we see that for all $z \in X_e$ and for all $c \in (A_2)_e$,
\[
E(A) \begin{pmatrix} z \\ c \end{pmatrix}
    = \begin{pmatrix} E_0 (az + xc) \\ E_{A_2} ( \langle y, z \rangle_{A_2} + bc ) \end{pmatrix}
    = (E_0 \oplus E_{A_2}) \left(\begin{pmatrix} a & x \\ \widetilde{y} & b \end{pmatrix} \begin{pmatrix} z \\ c \end{pmatrix}\right).
\]
So, writing $m_A : X_e \oplus (A_2)_e \to X_e \oplus (A_2)_e$ for left-multiplication by $A$ and $\iota$ for the inclusion $X_e \oplus (A_2)_e \hookrightarrow X \oplus A_2$, we have $E(A) = (E_0 \oplus E_{A_2})  \circ m_A \circ \iota$. Since $\|\iota\| = 1$ and $\|m_A\| = \|A\|$, it therefore suffices to show that $\|E_0 \oplus E_{A_2}\| \le 1$.  Let $z \in X$ and $c \in A_2$.  Since
\begin{align*}
0\leq \left\langle \begin{pmatrix} E_0(z) \\ E_{A_2}(c) \end{pmatrix} , \begin{pmatrix} E_0(z) \\ E_{A_2}(c) \end{pmatrix}\right\rangle_{A_2} &= \langle E_0(z) , E_0(z) \rangle_{A_2} + \langle E_{A_2}(c), E_{A_2}(c) \rangle_{A_2} \\
		&\leq E_{A_2} ( \langle z , z \rangle_{A_2} + \langle c, c \rangle_{A_2} ) \\
		&= E_{A_2} \bigg( \bigg\langle \bigg(\begin{matrix} z \\ c \end{matrix}\bigg) , \bigg(\begin{matrix} z \\ c \end{matrix}\bigg)\bigg\rangle_{A_2}  \bigg),
\end{align*}
we have
\[
\bigg\|  \bigg\langle \begin{pmatrix} E_0(z) \\ E_{A_2}(c) \end{pmatrix} , \begin{pmatrix} E_0(z) \\ E_{A_2}(c) \end{pmatrix} \bigg\rangle_{A_2}\bigg\| \leq \bigg\| \bigg\langle \bigg(\begin{matrix} z \\ c \end{matrix}\bigg) , \bigg(\begin{matrix} z \\ c \end{matrix}\bigg)\bigg\rangle_{A_2}  \bigg\|
\]
as required, completing the proof that $E$ is norm decreasing.

Since $E$ is a contractive projection from $L(X)$ onto $L(X)_e$, by \cite{Tomi1957}, $E$ is a conditional expectation.  Thus, $L(X)$ is a topologically graded $C^*$-algebra.
\end{proof}

We now specialise to graded imprimitivity bimodules for which the zero component is an imprimitivity bimodule.

\begin{dfn}[Homogeneous Graded Morita Equivalence]
Let $A$ and $B$ be topologically $\Gamma$-graded $C^*$-algebras.  We say $A$ and $B$ are \emph{homogeneously graded Morita equivalent} if there exists an $A$--$B$-imprimitivity bimodule $X$ that is graded, such that $X_e$ is an $A_e$--$B_e$-imprimitivity bimodule.
\end{dfn}

\begin{dfn}\label{dfn-univalent-grading}
Let $A$ be topologically graded $C^*$-algebra with grading $(A_g)_{g \in \Gamma}$. The closed subspaces $(A \otimes \Kk)_g := A_g \otimes \Kk$ of $A \otimes \Kk$ constitute a topological grading on $A \otimes \Kk$ which we call the \emph{univalent grading}.
\end{dfn}

\begin{rmk}
A $\Gamma$-grading of a $C^*$-algebra $A$ is frequently determined by a coaction of $\Gamma$ on $A$ (see the discussion following Corollary~\ref{cor-BGR-gradedME}): the coaction is uniquely determined by $\delta|_{A_g} = (a \mapsto a \otimes u_g)$.
In this picture, the univalent grading of $A \otimes \Kk$ corresponds to the coaction $\delta \otimes \operatorname{id}_{\Kk}$.
\end{rmk}

We are now ready to prove our first Brown-Green-Rieffel Stabilisation Theorem for topologically graded $C^*$-algebras.

\begin{thm}\label{thm-BGR}
Let $A$ and $B$ be $\sigma$-unital $C^*$-algebras that are topologically graded.  Then $A$ and $B$ are homogeneously graded Morita equivalent if and only if $A \otimes \Kk$ and $B \otimes \Kk$, endowed with the univalent gradings, are graded isomorphic.
\end{thm}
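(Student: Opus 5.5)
Both directions will be handled separately; the reverse direction is the easy one.

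\emph{($\Leftarrow$).} Suppose $\phi\colon A\otimes\Kk\to B\otimes\Kk$ is an isomorphism carrying $A_g\otimes\Kk$ onto $B_g\otimes\Kk$. The first step is to observe that $A\otimes\Kk$ is homogeneously graded Morita equivalent to $A$, via the bimodule $A\otimes\ell^2(\Gamma)$ with homogeneous subspaces $A_g\otimes\ell^2(\Gamma)$. Here $A\otimes\Kk$ acts on the left with the obvious inner products, and the unit fibre $A_e\otimes\ell^2(\Gamma)$ is an $A_e\otimes\Kk$--$A_e$ imprimitivity bimodule. The grading axioms are checked on elementary tensors: $\langle a\otimes\xi,b\otimes\eta\rangle_A=\langle\xi,\eta\rangle a^*b\in A_{g\inv h}$. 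Next, $\phi$ turns $B\otimes\ell^2(\Gamma)$ into a graded $A\otimes\Kk$--$B$ bimodule, because $\phi$ preserves fibres and in particular maps the unit fibre onto the unit fibre. Finally, homogeneous graded equivalence is symmetric (take the conjugate module with $\widetilde{X}_g:=(X_{g\inv})\widetilde{\;}$) and transitive. For transitivity, use the balanced tensor product $X\otimes_B Y$ graded by $(X\otimes_BY)_g=\clsp\{X_hY_k: hk=g\}$; its unit fibre contains $X_e\otimes_{B_e}Y_e$. To see that the unit fibre is exactly this and is an imprimitivity bimodule, I would use the conditional expectations $E_0$ from Lemma~\ref{lem-linkalg-grd}. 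Composing these gives $A\sim A\otimes\Kk\cong B\otimes\Kk\sim B$.

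\emph{($\Rightarrow$).} Let $X$ be a graded $A$--$B$ imprimitivity bimodule with $X_e$ an $A_e$--$B_e$ imprimitivity bimodule. The plan is to run the Brown--Green--Rieffel argument inside the linking algebra $L=L(X)$, which is topologically graded by Lemma~\ref{lem-linkalg-grd}. We have the corner projections $p,q\in M(L)$, which are full, and $pLp=A$, $qLq=B$. Since $A$ and $B$ are $\sigma$-unital, so is $L$. By Brown's theorem, the full corner $A\otimes\Kk=(p\otimes1)(L\otimes\Kk)(p\otimes1)$ is isomorphic to $L\otimes\Kk$ via a partial isometry $V\in M(L\otimes\Kk)$ with $V^*V=p\otimes1$ and $VV^*=1$; similarly $W$ works for $q$. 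Then $\operatorname{Ad}(W^*V)$ gives $A\otimes\Kk\cong B\otimes\Kk$.

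To make this isomorphism graded, I will build $V$ and $W$ to be of degree $e$, i.e.\ as strict limits of series $\sum_n v_n\otimes e_{n,m}$ with each $v_n\in (L_e)$-related multipliers. The hypothesis that $X_e$ is an imprimitivity bimodule means precisely that $p$ and $q$ are full in $L_e=L(X_e)$. Then Brown's construction, applied to the $\sigma$-unital algebra $L_e$, produces $V$ from a countable approximate unit and elements of $pL_e$. Consequently $V\in M(L_e\otimes\Kk)$, and $\operatorname{Ad}V$ maps $L_g\otimes\Kk$ into $L_g\otimes\Kk$, because $L_eL_gL_e\subseteq L_g$ and multipliers of $L_e$ extend to the relevant multipliers on $L$ using Lemma~\ref{lem-strgr-approx-id}.

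The main obstacle is this last point. Brown's partial isometry is built in $M(L_e\otimes\Kk)$, and we need it to act as a degree-$e$ multiplier of $L\otimes\Kk$ that preserves each fibre $L_g\otimes\Kk$. I would address this as follows. Write $V$ as a strict limit $\sum_n x_n\otimes e_{n,m_n}$ with $x_n\in pL_e$ and partial sums bounded. Convergence on $L\otimes\Kk$ then follows because $L_e\otimes\Kk$ contains an approximate identity for $L\otimes\Kk$ (Lemma~\ref{lem-strgr-approx-id}), and each partial sum visibly preserves the fibres $L_g\otimes\Kk$. Closedness of the fibres then gives the graded isomorphism.
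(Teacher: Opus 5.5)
Your proposal is correct. The forward direction is essentially the paper's argument. The corner projections are full in $L_e=L(X_e)$, Brown's partial isometry $V$ lies in $\Mm(L_e\otimes\Kk)$, and Lemma~\ref{lem-strgr-approx-id} shows that $V$, viewed as a multiplier of $L\otimes\Kk$, is a strict limit of elements of $L_e\otimes\Kk$, so it preserves each $L_g\otimes\Kk$. You do not need any special form for $V$; only $V\in\Mm(L_e\otimes\Kk)$ is used. (With your normalisation $V^*V=p\otimes 1$, its entries would in any case lie in $L_e p$ rather than $pL_e$.) One point that you and the paper both leave implicit is that $L_e$ is $\sigma$-unital. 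This holds because $E_A(h)$ is strictly positive in $A_e$ whenever $h$ is strictly positive in $A$: every state of $A_e$ composed with $E_A$ is a state of $A$.

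The reverse direction is where you genuinely differ. The paper works inside $B\otimes\Kk$. With $p=\phi(1\otimes\mathbf{e}_{1,1})$ and $q=1\otimes\mathbf{e}_{1,1}$, it takes $X=p(B\otimes\Kk)q$, graded by $p(B_g\otimes\Kk)q$. Since $\phi$ restricts to an isomorphism $A_e\otimes\Kk\cong B_e\otimes\Kk$, both projections are full in $B_e\otimes\Kk$, so $X_e$ is an $A_e$--$B_e$ imprimitivity bimodule. That is a one-step corner argument with no tensor products of bimodules. You instead establish symmetry and transitivity of homogeneous graded equivalence and compose $A\sim A\otimes\Kk\cong B\otimes\Kk\sim B$. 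Your route is longer, but it shows along the way, without any $\sigma$-unitality, that homogeneous graded Morita equivalence is an equivalence relation. In the transitivity step you do not need $E_0$, nor an exact identification of the unit fibre with $X_e\otimes_{B_e}Y_e$. The unit fibre $(X\otimes_BY)_e$ is invariant under $A_e$ and $C_e$ and has inner products in $A_e$ and $C_e$. It also contains $X_e\odot Y_e$, whose inner products already span dense subsets of $A_e$ and $C_e$ because $X_e$ and $Y_e$ are full. This alone makes it an $A_e$--$C_e$ imprimitivity bimodule.
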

\begin{proof}
First suppose that $P$ is a full projection in the multiplier algebra $\Mm(B_e)$ of the trivially graded component of $B$ and that $A = P B P$. Then $A_e = P B_e P$. By \cite[Lemma~2.5]{Brown}, there is a partial isometry $V$ in $\Mm(B_e \otimes \Kk)$ such that $V^*V = 1_{\Mm(B_e \otimes \Kk)}$ and $VV^* = P \otimes 1$. Since $B_e$ contains an approximate identity for $B$, the inclusion $B_e \hookrightarrow B$ extends to an inclusion $\Mm(B_e) \hookrightarrow \Mm(B)$, so we can regard $V$ as an element of $\Mm(B)$. Hence $\operatorname{Ad}_V : x \mapsto V x V^*$ is an isomorphism $B \otimes \Kk \to P B P \otimes \Kk = A \otimes \Kk$. Write $V = \lim_n v_n$ as a strict limit of elements $v_n$ of $B_e \otimes \Kk$. If $b \in B_g$ for some $g \in \Gamma$ and $T \in \Kk$, then $b \otimes T \in (B \otimes \Kk)_g$ and so $v_n (b \otimes T) v_n^* \in (B \otimes \Kk)_g$. Hence $V (b \otimes T) V^* = \lim_n v_n (b \otimes T)v_n^* \in (A \otimes \Kk)_g = A_g \otimes \Kk$. Hence $\operatorname{Ad}_V$ is a graded isomorphism.

Now suppose that $A$ and $B$ are just homogeneously graded Morita equivalent; let $X$ be the graded imprimitivity bimodule implementing this homogeneous graded Morita equivalence. Consider the linking algebra
\[
L = L(X) = \Big( \begin{matrix} A & X \\ \widetilde{X} & B\end{matrix}\Big).
\]
Then, by Lemma~\ref{lem-linkalg-grd}, $L$ is $\Gamma$-graded with graded subspaces
\[
L_g = \Big( \begin{matrix} A_g & X_g \\ {(X_{g^{-1}})\widetilde{\;}} & B_g\end{matrix}\Big).
\]
Since $X_e$ is a Morita equivalence between $A_e$ and $B_e$, we see that $A_e$ and $B_e$ are both full corners of $L_e$, so the projection $P_A = \big(\begin{smallmatrix} 1_{\mathcal{M}(A)} & 0 \\ 0 & 0\end{smallmatrix}\big)$ is a full projection in $\Mm(L_e)$. So the first paragraph yields a graded isomorphism $A \otimes \Kk \cong L \otimes \Kk$. Similarly, using the full projection $P_B = \big(\begin{smallmatrix} 0 & 0 \\ 0 & 1_{\mathcal{M}(B)}\end{smallmatrix}\big)$ in $\Mm(L_e)$, we have a graded isomorphism $B \otimes \Kk \cong L \otimes \Kk$. Combining these isomorphisms yields the desired graded isomorphism $A \otimes \Kk \cong B \otimes \Kk$.

Suppose $\phi \colon A \otimes \Kk \to B \otimes \Kk$ is a graded isomorphism with respect to the univalent gradings on $A \otimes \Kk$ and $B \otimes \Kk$. Write $\{ \mathbf{e}_{i,j}\}$ for the standard system of matrix units for $\Kk$. Let $p \coloneqq \varphi(1_{\mathcal{M}(A_1)} \otimes \mathbf{e}_{1,1})$ and $q \coloneqq 1_{\mathcal{M}(A_2)} \otimes \mathbf{e}_{1,1}$, and let $X \coloneqq p(A_2 \otimes \Kk)q$ with the standard inner products and actions.  Then $X$ is a graded $A_1$--$A_2$-imprimitivity bimodule implementing a homogeneously graded Morita equivalence between $A_1$ and $A_2$.
\end{proof}

We now show that for \sat graded $C^*$-algebras, any graded imprimitivity bimodule implements a homogeneously graded Morita equivalence.

\begin{cor}\label{cor-homogeneous for free}
Let $A$ and $B$ be $\sigma$-unital $C^*$-algebras that are \sat graded $C^*$-algebras. Suppose that $X$ is a graded $A$--$B$-imprimitivity bimodule.  Then $X_e$ is an $A_e$--$B_e$-im\-prim\-i\-tiv\-i\-ty bimodule, whereby $A$ and $B$ are homogeneously graded Morita equivalent. Consequently, $A \otimes \Kk$ and $B \otimes \Kk$, endowed with the univalent gradings, are graded isomorphic.
\end{cor}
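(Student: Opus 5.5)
We must show that $X_e$ is an $A_e$--$B_e$-imprimitivity bimodule. The last sentence of the corollary then follows from Theorem~\ref{thm-BGR}.

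First, $X_e$ is a sub-bimodule of the right structure. Condition~(1) of Definition~\ref{dfn:graded equivalence} gives $A_e X_e B_e \subseteq X_e$. Condition~(2) gives ${}_A\langle X_e, X_e\rangle \subseteq A_e$ and $\langle X_e, X_e\rangle_B \subseteq B_e$. The compatibility identity ${}_A\langle x,y\rangle z = x\langle y,z\rangle_B$ is inherited from $X$. Hence $X_e$, with the restricted actions and inner products, is a pre-imprimitivity bimodule between $A_e$ and $B_e$ in every respect except possibly fullness; it is closed because $X_e$ is closed in $X$. So the whole task is to prove that $\clsp\langle X_e,X_e\rangle_B = B_e$ and $\clsp {}_A\langle X_e,X_e\rangle = A_e$. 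The two are symmetric, so I treat the right one.

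The plan for fullness uses the conditional expectation and strongness together.
\begin{enumerate}
\item Since $X$ is full, $\clsp\langle X,X\rangle_B = B$.
\item Apply $E_B$ and use Remark~\ref{rmk-lin-indep}-style orthogonality: $E_B(\langle x_g, x_h\rangle_B)=0$ unless $g=h$. Hence
\[
B_e = E_B(B) = \clsp_{g}\langle X_g, X_g\rangle_B .
\]
\item Show that $\langle X_g, X_g\rangle_B \subseteq \clsp\langle X_e,X_e\rangle_B$ for each $g$. This is where strongness of $A$ enters. Using the approximate identity of $A_e$ (Lemma~\ref{lem-strgr-approx-id}), each $x\in X_g$ is a limit of elements of $A_e X_g$. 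By strongness, $A_e = \clsp A_{g}A_{g^{-1}}$, so $x$ is approximated by sums of elements $a\,c\,x$ with $a\in A_g$ and $c\in A_{g^{-1}}$. For these, $c\,x \in A_{g^{-1}}X_g\subseteq X_e$.
\item Consequently $X_g = \clsp A_g X_e$. Therefore
\[
\langle a y, a' y'\rangle_B = \langle y, a^*a' y'\rangle_B, \qquad y,y'\in X_e,\ a,a'\in A_g,
\]
where $a^*a'\in A_e$, so $a^*a'y' \in X_e$. Thus $\langle X_g,X_g\rangle_B\subseteq \clsp\langle X_e,X_e\rangle_B$, and hence $B_e = \clsp\langle X_e,X_e\rangle_B$.
\item Symmetrically, using strongness of $B$, $X_g = \clsp X_e B_g$, and the same argument with $E_A$ gives $A_e = \clsp {}_A\langle X_e,X_e\rangle$.
\end{enumerate}

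The main obstacle I anticipate is step (3), namely justifying that $X_g\subseteq\clsp A_gX_e$. The point needing care is that $X = \clsp A X$ is standard, but we need the finer statement $X_g \subseteq \clsp A_e X_g$ within the $g$-component. The argument above handles this: $e_\lambda x\to x$ for $x\in X_g$, because $\|e_\lambda x - x\|^2 = \|{}_A\langle e_\lambda x - x, e_\lambda x-x\rangle\|$ and ${}_A\langle x,x\rangle\in A_e$, exactly as in the proof of Lemma~\ref{lem-strgr-approx-id}.

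Once $X_e$ is an $A_e$--$B_e$-imprimitivity bimodule, $X$ witnesses a homogeneously graded Morita equivalence by definition. Since $A$ and $B$ are $\sigma$-unital and topologically graded, Theorem~\ref{thm-BGR} then yields the graded isomorphism $A\otimes\Kk\cong B\otimes\Kk$ for the univalent gradings.
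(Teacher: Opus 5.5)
Your proposal is correct and is essentially the paper's own argument. Both apply the conditional expectation to the full inner-product span to reduce to the diagonal terms $\langle X_g,X_g\rangle$, then use the approximate identity of the unit fibre together with strongness ($A_e=\clsp A_gA_{g^{-1}}$ for the right inner product, and symmetrically $B_e=\clsp B_{g^{-1}}B_g$ for the left) to move everything into $X_e$, and finally invoke Theorem~\ref{thm-BGR}.
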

\begin{proof}
We show that
\[
    A_e = {_{A_e}\langle X_e, X_e \rangle}  \quad \text{and} \quad B_e =  \langle X_e, X_e \rangle_{B_e}.
\]
Since $B$ is strongly graded, $B_{g^{-1}} B_{g} = B_e$ for each $g \in \Gamma$.  That $A = {_{A}\langle X, X \rangle}$ ensures that $A_e =E( {_{A}\langle X, X \rangle})$.  Hence, using Lemma~\ref{lem-strgr-approx-id} at the third equality, we compute
\begin{align*}
A_e &= E( {}_{A} \langle X, X \rangle)%\overset{\phantom{\ref{lem-strgr-approx-id}}}{=}
     = \clsp \{ E( {_{A}\langle x, y \rangle} ) : x , y \in X \} \\%&\overset{\eqref{lem-strgr-approx-id}}{=}
    &= \clsp \bigcup_{g \in \Gamma}  E({_{A}\langle X_g B_e, X_g \rangle}) %\overset{\phantom{\ref{lem-strgr-approx-id}}}{\subseteq}
     \subseteq \clsp \bigcup_{g \in \Gamma } E({_{A}\langle X_g B_{g^{-1}} B_g, X_g \rangle}) \\ %&\overset{\phantom{\eqref{lem-strgr-approx-id}}}{=}
    &= \clsp \bigcup_{ g \in \Gamma} E({_{A}\langle X_g B_{g^{-1}} , X_g B_{g^{-1}} \rangle}  )%\overset{\phantom{\eqref{lem-strgr-approx-id}}}{=}
    = \clsp E({_{A_e}\langle X_e, X_e\rangle} ) %\overset{\phantom{\ref{lem-strgr-approx-id}}}{\subseteq}
    \subseteq A_e.
\end{align*}
Since $A$ is strongly graded, $A_{g} A_{g^{-1}} = A_e$ for each $g \in \Gamma$.  Since $B =  \langle X, X \rangle_B$, we have $B_e = E( \langle X, X \rangle_B )$.  Thus, again using Lemma~\ref{lem-strgr-approx-id} at the third equality, we have
\begin{align*}
B_e &%\overset{\phantom{\eqref{lem-strgr-approx-id}}}{=}
    = E(\langle X, X \rangle_B) %\overset{\phantom{\eqref{lem-strgr-approx-id}}}{=}
    = \clsp \{ E( \langle x, y \rangle_A ) : x , y \in X \} \\ %&\overset{\eqref{lem-strgr-approx-id}}{=}
    &= \clsp \bigcup_{g \in \Gamma}  E(\langle A_e X_g , X_g \rangle_B) %\overset{\phantom{\ref{lem-strgr-approx-id}}}{\subseteq}
    \subseteq \clsp \bigcup_{g \in \Gamma } E(\langle A_g A_{g^{-1}} X_g ,X_g \rangle_B) \\ %&\overset{\phantom{\eqref{lem-strgr-approx-id}}}{=}
    &= \clsp \bigcup_{ g \in \Gamma} E(\langle A_{g^{-1}} X_g  , A_{g^{-1}} X_g \rangle  ) %\overset{\phantom{\eqref{lem-strgr-approx-id}}}{=}
    = \clsp  E(\langle X_e, X_e\rangle_{B_e} ) %\overset{\phantom{\ref{lem-strgr-approx-id}}}{\subseteq}
    \subseteq B_e.
\end{align*}
Thus, $X_e$ is an $A_e$--$B_e$-imprimitivity bimodule and $A$ and $B$ are homogeneously graded Morita equivalent.  The last part of the theorem follows from Theorem~\ref{thm-BGR}.
\end{proof}

Let $A$ be a $C^*$-algebra,  let $X$ be a right Hilbert $A$-module, and let $T$ be an adjointable operator on $X$. Since $\|Ty\| = \sup_{\|x\| = 1} \|\langle x, Ty\rangle_A\|$ (see \cite[page~7]{Lance} or \cite[Equation~(2.14)]{RW-Morita-Eq}), we have
\begin{equation}\label{eq-operator-norm}
\|T\| = \sup\{ \| \langle x , Ty \rangle : \|x\| \leq 1, \| y\| \leq 1\}.
\end{equation}

Let $A$ be a topologically $\Gamma$-graded $C^*$-algebra with conditional expectation $E_A \colon A \to A_e$.  By \cite[Corollary~19.6]{ExelBook}, for each $g \in \Gamma$, there exists a contractive linear map $E_A^g \colon A \to A_g$ such that for any finite sum $x = \sum_{ h } a_h$, $E_A^g(x) = a_g$.  Moreover for all $x \in A$ and for all $b \in A_h$,
\[
    E_A^g(bx) = bE_A^{h^{-1}g}(x) \quad \text{and} \quad E_A^g(xb) = E_{A}^{gh^{-1}}(x)b.
\]

\begin{lem}\label{lem-gradstable}
Let $\Gamma$ be a discrete group and let $A$ be a topologically $\Gamma$-graded $C^*$-algebra.  Then there is a topological $\Gamma$-grading on $A \otimes \Kk(\ell^2(\Gamma))$, which we call the \emph{bivalent grading}, whose spectral subspaces are
\[
(A \otimes \Kk(\ell^2(\Gamma)))_h = \overline{\operatorname{span}} \bigcup_{ g,k} \{ a \otimes \mathbf{e}_{g,k} : a \in A_{ghk^{-1}} \}.
\]
The associated conditional expectation $E_{\operatorname{biv}} \colon A \otimes \Kk(\ell^2(\Gamma)) \to (A \otimes \Kk(\ell^2(\Gamma)))_e$ satisfies
\[
E_{\operatorname{biv}} (a \otimes \mathbf{e}_{g,k} )= E_{A}^{gk^{-1}}(a) \otimes \mathbf{e}_{g,k}.
\]
\end{lem}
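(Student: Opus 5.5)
We verify the three grading axioms for the subspaces $C_h := (A \otimes \Kk(\ell^2(\Gamma)))_h$, writing $C = A \otimes \Kk(\ell^2(\Gamma))$, and then construct the conditional expectation directly from the maps $E_A^g$. The algebraic axioms reduce to computations on elementary tensors. For $a \in A_{ghk^{-1}}$ and $b \in A_{g'h'k'^{-1}}$, the product $(a \otimes \mathbf{e}_{g,k})(b \otimes \mathbf{e}_{g',k'})$ vanishes unless $k = g'$. When $k = g'$ it equals $ab \otimes \mathbf{e}_{g,k'}$, and $ab \in A_{ghk^{-1}kh'k'^{-1}} = A_{g(hh')k'^{-1}}$. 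Hence $C_h C_{h'} \subseteq C_{hh'}$, after passing to closed spans by bilinearity and continuity. Similarly, $(a \otimes \mathbf{e}_{g,k})^* = a^* \otimes \mathbf{e}_{k,g}$ with $a^* \in A_{kh^{-1}g^{-1}}$, so $C_h^* = C_{h^{-1}}$. For density, fix $g, k$ and $a \in A_m$. Setting $h = g^{-1}mk$ gives $a \otimes \mathbf{e}_{g,k} \in C_h$. Since $\bigcup_m A_m$ spans a dense subspace of $A$ and the elements $a \otimes \mathbf{e}_{g,k}$ span a dense subspace of $C$, the union of the $C_h$ spans a dense subspace of $C$.

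For the conditional expectation, we realise $E_{\operatorname{biv}}$ as a compression. Let $\{p_g\}$ denote the diagonal projections $1_{\Mm(A)} \otimes \mathbf{e}_{g,g}$ in $\Mm(C)$. For a finite set $F \subseteq \Gamma$, set $P_F = \sum_{g \in F} p_g$. Then $P_F C P_F \cong M_F(A)$, and every element of $C$ is the norm limit of its compressions $P_F x P_F$ as $F$ increases. On the matrix algebra $M_F(A)$ we define $\Phi_F\big((a_{g,k})\big) = \big(E_A^{gk^{-1}}(a_{g,k})\big)$. This map clearly agrees with the required formula and is idempotent, and its range is exactly $C_e \cap P_F C P_F$, because $a \otimes \mathbf{e}_{g,k} \in C_e$ precisely when $a \in A_{gk^{-1}}$. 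The maps $\Phi_F$ are compatible as $F$ increases. So, once they are shown to be contractive uniformly in $F$, they assemble into a contractive idempotent $E_{\operatorname{biv}}$ on $C$ with range $C_e$, vanishing on every $C_h$ with $h \neq e$. Tomiyama's theorem \cite{Tomi1957}, as used in Lemma~\ref{lem-linkalg-grd}, then shows that $E_{\operatorname{biv}}$ is a conditional expectation.

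The main obstacle is proving that $\Phi_F$ is contractive, since the individual maps $E_A^g$ are only known to be contractive, not completely so. The cleanest approach is to exhibit $\Phi_F$ as a compression of $E_A \otimes \operatorname{id}$, conjugated by a diagonal unitary. Here is what I would try first. Suppose the grading came from a coaction $\delta$, or more generally that we can work in $A \otimes C^*_r(\Gamma)$ via the regular representation of the Fell bundle. Then the diagonal unitary $U = \operatorname{diag}(1 \otimes \lambda_g)_{g \in F}$ conjugates $(a_{g,k})$ to $(\cdots a_{g,k} \otimes \lambda_g \lambda_k^{-1} \cdots)$. Applying the faithful expectation $\operatorname{id} \otimes \tau$ entrywise, which is completely positive and hence contractive on matrices, picks out exactly the $gk^{-1}$-component of each entry.

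To avoid needing a coaction, I would instead use the Fell bundle $\mathcal{B} = (A_g)$ with its reduced cross-sectional algebra $C^*_r(\mathcal{B})$. By \cite[Corollary~19.6]{ExelBook}, topological gradings correspond to quotients of $C^*(\mathcal{B})$ lying above $C^*_r(\mathcal{B})$, and the map $E_A$ factors through $C^*_r(\mathcal{B})$. I would therefore prove contractivity of $\Phi_F$ on $M_F(C^*_r(\mathcal{B}))$ using the concrete representation on $\ell^2(\mathcal{B})$. There $\Phi_F$ becomes compression by the diagonal projection onto "matching degree" subspaces, which is a norm-one operation, and the estimate then pulls back to $A$ since $E_A = E_r \circ \Lambda$.
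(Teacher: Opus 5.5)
Your proof is correct in outline, but it reaches the key norm estimate by a different route from the paper. The grading axioms are checked exactly as in the paper.

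\textbf{How the two routes differ.} For contractivity, the paper neither truncates to $M_F(A)$ nor passes to $C^*_r(\mathcal{B})$. It represents $A\xt\Kk(\ell^2(\Gamma))$ on $\bigoplus_g A$ and restricts the $e$-fibre to the Hilbert $A_e$-module $\bigoplus_g A_g$. The bimodule property of the maps $E_A^t$ then gives $\langle w,\iota(E^0(x))z\rangle=E_A(\langle w,j(x)z\rangle)$ for $w,z\in\bigoplus_gA_g$, and contractivity of $E_A$ yields the bound.

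That route is lighter than yours: it uses only $E_A$ and the $E_A^t$, not the reduced cross-sectional algebra, and it treats all of $A\xt\Kk(\ell^2(\Gamma))$ at once. Its cost is that it must know the $e$-fibre acts faithfully on the single module $\bigoplus_gA_g$. The paper deduces this from nondegeneracy of $A_{gh\inv}$ over $\clsp A_hA_{h\inv}$, which holds for saturated gradings but not in general. For example, grade $\clsp\{z^{2n}:n\in\ZZ\}\subseteq C(\TT)$ over $\ZZ$ with $A_{2n}=\mathbb{C}z^{2n}$ and zero odd fibres. Then $z^2\xt\mathbf{e}_{3,1}$ lies in the $0$-fibre but annihilates $\bigoplus_gA_g$, because $A_1=0$.

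Working on all of $\ell^2(\mathcal{B})^F$, with every degree slice, is what lets your argument avoid this issue. The same slices would also repair the paper's argument without leaving $A$: use every module $\bigoplus_gA_{gs}$ for $s\in\Gamma$, since these are jointly faithful on the $e$-fibre.

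\textbf{Making your last step precise.} It should be a pinching, not a single compression. For $s\in\Gamma$, let $Q_s$ be the projection of $\ell^2(\mathcal{B})^F$ onto the submodule of tuples $(\xi_g)_{g\in F}$ with each $\xi_g$ supported at the point $gs$. These projections are mutually orthogonal and sum strictly to $1$.
\begin{itemize}
\item A degree-$t$ entry in position $(g,k)$ maps the range of $Q_s$ into the range of $Q_{g\inv tks}$. It therefore preserves each slice exactly when $t=gk\inv$.
\item Consequently $\Phi_F(T)=\sum_sQ_sTQ_s$. This holds first on finitely supported entries and then by continuity.
\item Since $\Phi_F(T)$ is block diagonal, $\|\Phi_F(T)\|=\sup_s\|Q_sTQ_s\|\le\|T\|$.
\end{itemize}
Compressing by one projection such as $Q_e$ is also contractive, but it need not recover $\|\Phi_F(T)\|$, for the reason in the example above.

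\textbf{The pull-back to $A$.} This step needs $\Lambda\xt\operatorname{id}_{M_F}$ to be isometric on the $C^*$-algebra $M_F(A)\cap(A\xt\Kk(\ell^2(\Gamma)))_e$. This holds because it is an injective $*$-homomorphism there, as $\Lambda$ is isometric on each fibre. Without this observation you only get $\|(\Lambda\xt\operatorname{id}_{M_F})(\Phi_F(x))\|\le\|x\|$, not $\|\Phi_F(x)\|\le\|x\|$.
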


\begin{proof}
Let $a \in A_{ghk^{-1}}$ and let $b \in A_{\ell m n^{-1}}$.  Then $ab \in A_{ghmn^{-1}}$ if $k = \ell$ which implies
\[
(a \otimes \mathbf{e}_{g,k})( b \otimes \mathbf{e}_{\ell, n}) = \delta_{ k , \ell} (ab \otimes \mathbf{e}_{g, n}) \in ( A \otimes \Kk(\ell^2(\Gamma)))_{hm}.
\]
Thus, $( A \otimes \Kk(\ell^2(\Gamma)))_{h} ( A \otimes \Kk(\ell^2(\Gamma)))_{m} \subseteq ( A \otimes \Kk(\ell^2(\Gamma)))_{hm}$.  It is clear that $( A \otimes \Kk(\ell^2(\Gamma)))_{h}^* = ( A \otimes \Kk(\ell^2(\Gamma)))_{h^{-1}}$.  If $a \in A_h$, then
\[
a \otimes \mathbf{e}_{g, k} \in ( A \otimes \Kk(\ell^2(\Gamma)))_{g^{-1} hk}.
\]
Consequently, $\overline{\operatorname{span} } \bigcup_{ g } ( A \otimes \Kk(\ell^2(\Gamma)))_{g} = A \otimes \Kk(\ell^2(\Gamma))$.

We are left to show that there is a conditional expectation $E_{\operatorname{biv}} \colon A \otimes \Kk(\ell^2(\Gamma)) \to (A \otimes \Kk(\ell^2(\Gamma)))_e$ satisfying the desired formula. Since the subspaces $\{A \otimes \mathbf{e}_{g,h} : g,h \in \Gamma\}$ are linearly independent, there is a linear map $E^0_{\operatorname{stb}} \colon \lsp\{a \otimes \mathbf{e}_{g,h} : a \in A, g,h \in \Gamma\} \to (A \otimes \Kk(\ell^2(\Gamma)))_e$ such that $E_{\operatorname{biv}} (a \otimes \mathbf{e}_{g,k} )= E_{A}^{gk^{-1}}(a) \otimes \mathbf{e}_{g,k}$ for all $a,g,h$. To show that $E^0_{\operatorname{stb}}$ extends to the desired conditional expectation, we need a little setup.

For $b \in \bigoplus_{g \in \Gamma} A$ and $g \in \Gamma$, we write $b_g$ for the $g$\textsuperscript{th} entry of $b$. The diagonal homomorphism $j_A \colon A \hookrightarrow \mathcal{L}(\bigoplus_{g \in \Gamma} A)$ such that $(j_A(a)b)_g = a b_g$ and the canonical homomorphism $j_{\mathcal{K}} \colon \Kk(\ell^2(\Gamma)) \to \mathcal{L}(\bigoplus_{g \in \Gamma} A)$ such that $(j_{\mathcal{K}}(\mathbf{e}_{g,h}) \cdot b)_k = \delta_{g,k} b_h$ have commuting ranges, and so the universal property of the tensor product yields a homomorphism $j = j_A \otimes j_{\mathcal{K}} \colon A \otimes \Kk(\ell^2(\Gamma)) \to \mathcal{L}(\bigoplus_{g \in \Gamma} A)$ such that
\[
j \bigg( \sum_{g,h \in \Gamma} a(g,h) \otimes \mathbf{e}_{g,h} \bigg) ( z_h) = \bigg( \sum_{ h} a(g,h) z_h \bigg).
\]
Since $\mathcal{K}(\ell^2(\Gamma))$ is simple and nuclear and since $j_A$ is injective, $j$ is injective. If $a = \sum_{g,h \in \Gamma} a(g,h) \otimes \mathbf{e}_{g,h}$ belongs to $(A \otimes \Kk(\ell^2(\Gamma)))_e$, then each $a(g,h) \in A_{gh^{-1}}$, and so each $a(g,h) z_h \in A_{g}$. Hence $X \coloneqq  \bigoplus_{g \in \Gamma} A$ is invariant for $j\big((A \otimes \Kk(\ell^2(\Gamma)))_e\big)$. Thus, $j$ restricts to a homomorphism $\iota \colon   (A \otimes \Kk(\ell^2(\Gamma)))_e \to \mathcal{L}(X)$. We claim that $\iota$ is injective.

To see this, suppose that $x = \sum_{g,h \in \Gamma} a(g,h) \otimes \mathbf{e}_{g,h} \in  A \otimes \Kk(\ell^2(\Gamma))_e$ satisfies $\iota(x) = 0$.  Then for all $(z_k) \in X$ and for all $g \in \Gamma$, we have $\sum_{ h} a(g,h) z_h = 0$.  Since the $A_g$ are linearly independent, we have $a(g,h) z_h = 0$.  Consequently, for all $g, h \in \Gamma$ and for all $z \in A_h$, we have $a(h,g) z = 0$.  Let $g, h \in \Gamma$, and write $[A_hA_{h^{-1}}]$ for the closed linear span of $A_{h}A_{h^{-1}}$, and fix an approximate identity $(y_i)$ of $[A_hA_{h^{-1}}]$. Then $A_{gh^{-1}}$ is a right Hilbert $[A_hA_{h}^{-1}]$-module, and so $a(g,h) y_i \to a(g,h)$.  Since $a(g,h) z =0$ for all $z \in A_h$, we have $a(g,h)y_i = 0$ for all $i$.  Hence, $a(g,h)=0$.  Consequently, $x = 0$. So $\iota$ is injective as claimed.

Now, fix a finite sum $x = \sum_{g,h \in \Gamma} a(g,h) \otimes \mathbf{e}_{g,h} \in A \otimes \Kk (\ell^2(\Gamma))$. Let $y = \sum_{g,h \in \Gamma} E^{gk^{-1}}(a(g,h)) \otimes \mathbf{e}_{g,h}$. Equation~\eqref{eq-operator-norm} gives
\begin{align*}
\|E^0_{\operatorname{stb}}(x)\|
    &= \| \iota\left( y \right)  \|\\
    &= \sup \{  \| \langle (w_k) , \iota (y) (z_k) \rangle\| : \| (w_k)\| \leq 1, \| (z_k)\| \leq 1 \} \\
	&= \sup \bigg\{ \bigg\| \sum_{g, h} w_g^* E_A^{gh^{-1}} ( a(g,h)) z_h \bigg\| :  \| (w_k)\| \leq 1, \| (z_k)\| \leq 1  \bigg\},
\intertext{and applying \cite[Corollary~19.6]{ExelBook}, we can continue:}
    &= \| \iota\left( y \right)  \| \\
    &= \sup \bigg\{ \bigg\| \sum_{g, h}  E_A ( w_g^*a(g,h) z_h)  \bigg\| :  \| (w_k)\| \leq 1, \| (z_k)\| \leq 1  \bigg\}  \\
	&\leq \sup \bigg\{ \bigg\| \sum_{g, h}  w_g^*a(g,h) z_h \bigg\| :  \| (w_k)\| \leq 1, \| (z_k)\| \leq 1  \bigg\}  \\
	&\leq  \sup \{ \| \langle (w_k), j(x) (z_k) \rangle :  \| (w_k)\| \leq 1, \| (z_k)\| \leq 1  \}.
\end{align*}
Another application of~\eqref{eq-operator-norm} shows that this is equal to $\| j(x) \|$. Since $\iota$ and $j$ are injective $C^*$-homomorphisms, and hence isometric, we obtain $\|E^0_{\operatorname{stb}}(x)\| \le \|x\|$. Thus $E^0_{\operatorname{stb}}$ is norm-decreasing, and hence extends to a contractive linear map $E_{\operatorname{biv}} \colon A \otimes \Kk(\ell^2(\Gamma)) \to (A \otimes \Kk(\ell^2(\Gamma)))_e$. This map clearly pointwise fixes $(A \otimes \Kk(\ell^2(\Gamma)))_e$ and vanishes on $(A \otimes \Kk(\ell^2(\Gamma)))_g$ for $g \neq e$. It now follows from \cite{Tomi1957} that $E_{\operatorname{biv}}$ is a conditional expectation.  This completes the proof that $A \otimes \Kk(\ell^2(\Gamma))$ is a topologically graded $C^*$-algebra.
\end{proof}

We write $A \otimes \Kk(\ell^2(\Gamma))^{\operatorname{uni}}$ to indicate $A \otimes \mathcal{K}(\ell^2(\Gamma))$ with the univalent grading of Definition~\ref{dfn-univalent-grading}, and $A \otimes \Kk(\ell^2(\Gamma))^{\operatorname{biv}}$ for $A \otimes \mathcal{K}(\ell^2(\Gamma))$  with the bivalent grading of Lemma~\ref{lem-gradstable}.

\begin{thm}\label{thm-BGR-gradedME}
Let $\Gamma$ be a countable, discrete group, let $A$ be a unital topologically $\Gamma$-graded $C^*$-algebra, and let $p$ be a projection in $A_e$ that is full in $A$.  Then $p A p \otimes \Kk(\ell^2(\Gamma))^{\operatorname{biv}} \otimes \Kk(\ell^2(\Gamma))^{\operatorname{uni}}$ is graded isomorphic to $A \otimes  \Kk(\ell^2(\Gamma))^{\operatorname{biv}} \otimes \Kk(\ell^2(\Gamma))^{\operatorname{uni}}$.
\end{thm}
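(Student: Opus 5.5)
The plan is to reduce to the first paragraph of the proof of Theorem~\ref{thm-BGR}. That argument only needs a full projection in the multiplier algebra of the \emph{unit fibre}; the univalent factor then supplies the Brown isometry. Write $\mathcal{B} := A\otimes\Kk(\ell^2(\Gamma))^{\operatorname{biv}}$, with unit fibre $\mathcal{B}_e$, and set $P := p\otimes 1 \in \Mm(\mathcal{B})$. Since $p\in A_e$, we have $p\otimes \mathbf{e}_{g,g}\in \mathcal{B}_e$ for every $g$, so $P$ is a strict limit of elements of $\mathcal{B}_e$ and lies in $\Mm(\mathcal{B}_e)$. The compression is $P\mathcal{B}P = pAp\otimes\Kk$. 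Its bivalent fibres are spanned by $pA_{ghk^{-1}}p\otimes \mathbf{e}_{g,k}$, and since $pA_{ghk^{-1}}p = (pAp)_{ghk^{-1}}$, these coincide with $P\mathcal{B}_hP$. So $pAp\otimes\Kk^{\operatorname{biv}}$ is exactly the corner $P\mathcal{B}P$ with the restricted grading.

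The key step is to show that $P$ is full in $\Mm(\mathcal{B}_e)$, that is, $\overline{\mathcal{B}_e P\mathcal{B}_e}=\mathcal{B}_e$. This is precisely where the bivalent grading is needed: $p$ need not be full in $A_e$, only in $A$. Fix $a\in A_{gk^{-1}}$, so that $a\otimes \mathbf{e}_{g,k}\in\mathcal{B}_e$. Fullness of $p$ in $A$ lets us approximate $a$ by finite sums $\sum x_i p y_i$. Applying the Exel maps $E_A^{\cdot}$ and expanding $x_i,y_i$ into homogeneous pieces, we may take $x_i\in A_{gl^{-1}}$ and $y_i\in A_{lk^{-1}}$ for various $l$. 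Then
\[
x_ipy_i\otimes \mathbf{e}_{g,k} = (x_i\otimes \mathbf{e}_{g,l})(p\otimes \mathbf{e}_{l,l})(y_i\otimes \mathbf{e}_{l,k}),
\]
where the outer factors lie in $\mathcal{B}_e$ and the middle one lies in $P\mathcal{B}_e P$. To make this rigorous I would use that $E_A^{gk^{-1}}$ is contractive and satisfies the bimodule rules, so $a = E_A^{gk^{-1}}(a)$ is a limit of sums of terms $E_A^{gl^{-1}}(x)\,p\,E_A^{lk^{-1}}(y)$, summed over $l$. Making this approximation honest, with an infinite sum over $l$ and only approximate decompositions of $x$ and $y$, is the main technical obstacle. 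The cleanest route is to check that $\mathcal{B}_e P\mathcal{B}_e$ is an ideal whose closure contains every $E_{\operatorname{biv}}$-image of $\mathcal{B}P\mathcal{B}$; since $\mathcal{B}P\mathcal{B}$ is dense in $\mathcal{B}$, this closure then equals $E_{\operatorname{biv}}(\mathcal{B})=\mathcal{B}_e$. Concretely, Lemma~\ref{lem-gradstable} together with the rule $E_{\operatorname{biv}}(bxc)=b\,E_{\operatorname{biv}}(x)\,c$ for $b,c\in\mathcal{B}_e$ shows that $E_{\operatorname{biv}}(x P y)$ lies in $\overline{\mathcal{B}_e P\mathcal{B}_e}$ for homogeneous $x,y$. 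Indeed, for $x\in\mathcal{B}_h$ and $y\in\mathcal{B}_{h^{-1}}$, write $x=\sum a\otimes \mathbf{e}_{g,l}$ and multiply by the diagonal unit $p\otimes \mathbf{e}_{l,l}$. This is where the index bookkeeping $ghk^{-1}$ must be checked carefully, using that $P$ commutes with the $\Kk$-diagonal.

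Once fullness is established, apply \cite[Lemma~2.5]{Brown} to $\mathcal{B}_e$. This uses $\sigma$-unitality: $A$ is unital and $\Gamma$ is countable, so $\mathcal{B}_e$ is $\sigma$-unital. It gives $V\in\Mm(\mathcal{B}_e\otimes\Kk)$ with $V^*V=1$ and $VV^*=P\otimes 1$. By Lemma~\ref{lem-strgr-approx-id}, $\mathcal{B}_e\otimes\Kk$ is the unit fibre of $\mathcal{B}\otimes\Kk^{\operatorname{uni}}$ and contains an approximate identity for it, so $V\in\Mm(\mathcal{B}\otimes\Kk)$. Exactly as in the first paragraph of the proof of Theorem~\ref{thm-BGR}, $\operatorname{Ad}_V$ is an isomorphism $\mathcal{B}\otimes\Kk\to (P\otimes1)(\mathcal{B}\otimes\Kk)(P\otimes 1)=pAp\otimes\Kk^{\operatorname{biv}}\otimes\Kk^{\operatorname{uni}}$. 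It is graded: $V$ is a strict limit of elements $v_n$ of the unit fibre, and $v_n(\cdot)v_n^*$ preserves each fibre of the product grading. This gives the required graded isomorphism.
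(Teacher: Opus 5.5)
Your proof is correct. Its overall architecture is the paper's: take $P=p\otimes 1=\sum_g p\otimes\mathbf{e}_{g,g}$, show that $P$ is full in $\Mm(\mathcal{B}_e)$, and apply Brown's isometry. You run the isometry argument directly. The paper instead forms $X=P\mathcal{B}$ and invokes Theorem~\ref{thm-BGR}, whose proof comes down to the same isometry argument via the linking algebra.

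The genuine difference is how fullness is proved.

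\emph{The paper's route} uses the unit:
\begin{enumerate}
\item it writes $1_A=\sum_i x_i^*px_i$ and approximates each $x_i$ by a finite homogeneous sum $y_i$;
\item it uses invertibility of $E_A(\sum_i y_i^*py_i)$ in $A_e$ to renormalise to an exact identity $1_A=\sum_{i,g}b_{g,i}^*pb_{g,i}$ with $b_{g,i}\in A_g$;
\item it exhibits $1_A\otimes\mathbf{e}_{h,h}=\sum_i X_i^*PX_i$ with $X_i=\sum_g b_{g,i}\otimes\mathbf{e}_{gh,h}\in\mathcal{B}_e$.
\end{enumerate}
This buys exact, explicit formulas.

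\emph{Your route} is a soft density argument: the span of $\mathcal{B}P\mathcal{B}$ is dense, you push it through the contractive map $E_{\operatorname{biv}}$, and you refactor the surviving terms. This avoids the invertibility trick and uses only $\overline{\operatorname{span}}\,ApA=A$. So your fullness step does not need the unit of $A$, which fits the authors' remark before Corollary~\ref{cor-BGR-gradedME}.

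\textbf{A detail to fix.} ``Multiply by $p\otimes\mathbf{e}_{l,l}$'' does not land you in $\mathcal{B}_eP\mathcal{B}_e$. The reason is that $a\otimes\mathbf{e}_{g,l}$ with $a\in A_{ghl^{-1}}$ lies in $\mathcal{B}_h$, not in $\mathcal{B}_e$. What you need is the middle-index shift from your first display. For $a\in A_{ghl^{-1}}$ and $b\in A_{lh^{-1}n^{-1}}$,
\[
(a\otimes\mathbf{e}_{g,l})\,P\,(b\otimes\mathbf{e}_{l,n}) = apb\otimes\mathbf{e}_{g,n} = (a\otimes\mathbf{e}_{g,lh^{-1}})\,P\,(b\otimes\mathbf{e}_{lh^{-1},n}),
\]
and both outer factors on the right lie in $\mathcal{B}_e$.

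\textbf{A simplification.} For homogeneous $x_h\in\mathcal{B}_h$ and $y_k\in\mathcal{B}_k$ you have $x_hPy_k\in\mathcal{B}_{hk}$, because $P$ is a strict limit of finite diagonal sums lying in $\mathcal{B}_e$. Hence $E_{\operatorname{biv}}$ simply fixes or kills each term, and the bimodule rule for $E_{\operatorname{biv}}$ is not needed.
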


\begin{proof}
Set $P = \sum_{ g } p \otimes \mathbf{e}_{g,g}$, which is an element of the multiplier algebra of $A \otimes \Kk( \ell^2(\Gamma))^{\operatorname{biv}}$.  We claim that $P\left(A \otimes \Kk(\ell^2(\Gamma))^{\operatorname{biv}}\right)_eP$ is full in $\left(A \otimes \Kk(\ell^2(\Gamma))^{\operatorname{biv}}\right)_e$.  Let $I$ be the ideal of $\left(A \otimes \Kk(\ell^2(\Gamma))^{\operatorname{biv}}\right)_e$ generated by $P\left(A \otimes \Kk(\ell^2(\Gamma))^{\operatorname{biv}}\right)_eP$.  Since $\sum_{ g } 1_{A} \otimes \mathbf{e}_{g,g}$ is the identity of the multiplier algebra of $\left(A \otimes \Kk(\ell^2(\Gamma))^{\operatorname{biv}}\right)_e$, to prove that $I = \left(A \otimes \Kk(\ell^2(\Gamma))^{\operatorname{biv}}\right)_e$, it is enough to show that for all $h \in \Gamma$, the element $1_A \otimes \mathbf{e}_{hh}$ belongs to $I$.

Fix $h \in \Gamma$.  Since $p$ is full in the unital $C^*$-algebra $A$, by \cite[Lemma~3.3.6]{HL-book}, there exist $x_1, \ldots, x_n \in A$ such that $1_A = \sum_{ i = 1}^n x_i^* p x_i$.  For each $i$, choose $y_i = \sum_{ g } a_{g,i} \in \operatorname{span} \bigcup_{g} A_g$ such that $\| x_i - y_i \| < \frac{1}{ n(2\max_i\{ \|x_i\| \} + 1)}$.  Then
\begin{align*}
\bigg\| 1_A {}&- E_{A}\bigg(\sum_{ i=1}^n y_i^* p y_i \bigg) \bigg\| \\
    &= \bigg\| E_A\bigg(1_A - \sum_{ i=1}^n y_i^* p y_i \bigg) \bigg\| \\
	&\leq \bigg\| \sum_{ i=1}^n x_i^* p x_i - \sum_{ i=1}^n y_i^* p x_i\bigg\| + \bigg\| \sum_{ i=1}^n y_i^* p x_i - \sum_{ i=1}^n y_i^* p y_i\bigg\| \\
	&\leq \bigg(\sum^n_{i=1} \|(x_i - y_i)^*\| \|x_i\|\bigg) + \bigg(\sum^n_{i=1} \|y_i^*\| \|x_i - y_i\|\bigg)\\
    &= \sum_{i=1}^n\| x_i - y_i \| (\|x_i\| + \|y_i\|) \\
	&<  n\frac{1}{ n(2\max_i\{ \|x_i\| \} + 1)}( 2\max_i\{ \|x_i\| \} +1 )
	  < 1.
\end{align*}
Thus, $E_{A}\left(\sum_{ i=1}^n y_i^* p y_i \right)$ is an invertible positive element in $A_e$.  Hence, $z := \big(E_{A}\big(\sum_{ i=1}^n y_i^* p y_i \big)\big)^{-1} \in A_e$ is positive.  Thus,
\[
    1_A = E_{A}\bigg( \sum_{ i=1}^n z^{1/2} y_i^* p y_i z^{1/2} \bigg)
\]
and
\[
    E_A (z^{1/2} y_i^* p y_i z^{1/2}) = \sum_{g} z^{1/2} a_{g,i}^* p a_{g,i}z^{1/2}
\]
for all $i$.  Thus,
\[
    1_A = \sum_{ i = 1}^n \sum_{ g } z^{1/2} a_{g,i}^* p a_{g,i}z^{1/2}.
\]
Set $b_{g,i} = a_{g,i} z^{1/2}$.  Then
\begin{align*}
1_A \otimes \mathbf{e}_{h,h}
    &= \sum_{ i = 1}^n \sum_{ g } b_{g,i}^* p b_{g,i} \otimes \mathbf{e}_{h,h} \\
	&= \sum_{ i = 1}^n \bigg( \bigg(\sum_{g} b_{g,i}^* \otimes \mathbf{e}_{h , gh} \bigg) \bigg( \sum_{ g } p \otimes \mathbf{e}_{gh, gh} \bigg) \bigg( \sum_g b_{g,i} \otimes \mathbf{e}_{gh, h}\bigg)\bigg) \\
	&= \sum_{ i=1}^n  \bigg(\sum_{g} b_{g,i}^* \otimes \mathbf{e}_{h , gh} \bigg) P \bigg( \sum_g b_{g,i} \otimes \mathbf{e}_{gh, h}\bigg).
\end{align*}
For each $i$, both $b_{g,i} \otimes \mathbf{e}_{gh, h}$ and $b_{g,i}^* \otimes \mathbf{e}_{h, gh}$ belong to $\left(A \otimes \Kk(\ell^2(\Gamma))^{\operatorname{biv}}\right)_e$ because $b_{g,i} \in A_g = A_{ghh^{-1}}$ and $b_{g,i}^* \in A_{g^{-1}} = A_{h( gh)^{-1}}$.  Thus, $\sum_g b_{g,i} \otimes \mathbf{e}_{gh, h}$ and $\sum_{g} b_{g,i}^* \otimes \mathbf{e}_{h ,gh}$ belong to $(A \otimes \Kk(\ell^2(\Gamma)))_e$ for all $i$.  Consequently, $I = \left(A \otimes \Kk(\ell^2(\Gamma))^{\operatorname{biv}}\right)_e$, which is to say that $P\left(A \otimes \Kk(\ell^2(\Gamma))^{\operatorname{biv}}\right)_eP$ is full in $\left(A \otimes \Kk(\ell^2(\Gamma))^{\operatorname{biv}}\right)_e$.

Let $X = P\left(A \otimes \Kk(\ell^2(\Gamma))^{\operatorname{biv}}\right)$.  The previous paragraph implies that $X$ is a graded $P\left(A \otimes \Kk(\ell^2(\Gamma))^{\operatorname{biv}}\right)P$--$A \otimes \Kk(\ell^2(\Gamma))^{\operatorname{biv}}$-bimodule that makes $P\left(A \otimes \Kk(\ell^2(\Gamma))^{\operatorname{biv}}\right)P$ and $A \otimes \Kk(\ell^2(\Gamma))^{\operatorname{biv}}$ homogeneously graded Morita equivalent.  By Lemma~\ref{thm-BGR},
\[
P\left(A \otimes \Kk(\ell^2(\Gamma))^{\operatorname{biv}}\right)P \otimes \Kk(\ell^2(\Gamma))^{\operatorname{uni}} \cong_{\operatorname{gr}} A \otimes \Kk(\ell^2(\Gamma))^{\operatorname{biv}} \otimes \Kk(\ell^2(\Gamma))^{\operatorname{uni}}.
\]
As $P\left(A \otimes \Kk(\ell^2(\Gamma))^{\operatorname{biv}}\right)P \otimes \Kk(\ell^2(\Gamma))^{\operatorname{uni}}$ and $p A p \otimes \Kk(\ell^2(\Gamma))^{\operatorname{biv}} \otimes \Kk(\ell^2(\Gamma))^{\operatorname{uni}}$ are graded isomorphic, the result follows.
\end{proof}

The next result is our second Brown--Green--Rieffel Stabilisation for topologically graded $C^*$-algebras.  We restrict to unital $C^*$-algebras as this covers all of our applications. We expect that the corollary also holds for topologically graded $C^*$-algebras with an approximate unit consisting of projections in the zero component.

\begin{cor}\label{cor-BGR-gradedME}
Let $\Gamma$ be a countable, discrete group, and let $A$ and $B$ be unital topologically $\Gamma$-graded $C^*$-algebras.  Then $A$ and $B$ are graded Morita equivalent if and only if $A  \otimes \Kk(\ell^2(\Gamma))^{\operatorname{biv}} \otimes \Kk(\ell^2(\Gamma))^{\operatorname{uni}}$ is graded isomorphic to $B \otimes  \Kk(\ell^2(\Gamma))^{\operatorname{biv}} \otimes \Kk(\ell^2(\Gamma))^{\operatorname{uni}}$.
\end{cor}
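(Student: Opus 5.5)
For the forward direction, let $X$ be a graded $A$--$B$-imprimitivity bimodule and form the linking algebra $L = L(X)$. By Lemma~\ref{lem-linkalg-grd}, $L$ is a topologically $\Gamma$-graded $C^*$-algebra with $L_g = \smtx{A_g & X_g\\ (X_{g\inv})\widetilde{\;} & B_g}$. Since $A$ and $B$ are unital, $L$ is unital with unit $1_A \oplus 1_B$. The two corner projections $p_A = \smtx{1_A & 0\\0&0}$ and $p_B=\smtx{0&0\\0&1_B}$ lie in $L_e$, and they are full in $L$ because $X$ is an imprimitivity bimodule (so ${}_A\langle X,X\rangle = A$ and $\langle X,X\rangle_B=B$). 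Moreover $p_A L p_A = A$ and $p_B L p_B = B$ as graded algebras, since the grading on $L$ restricts to the given gradings on the corners.

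Applying Theorem~\ref{thm-BGR-gradedME} to $(L,p_A)$ gives a graded isomorphism
\[
A \xt \Kk(\ell^2(\Gamma))^{\operatorname{biv}} \xt \Kk(\ell^2(\Gamma))^{\operatorname{uni}} \cong_{\operatorname{gr}} L \xt \Kk(\ell^2(\Gamma))^{\operatorname{biv}} \xt \Kk(\ell^2(\Gamma))^{\operatorname{uni}},
\]
and applying it to $(L,p_B)$ gives the analogous isomorphism for $B$. Composing the first with the inverse of the second yields the forward implication. The only point to check is that the bivalent grading built from $p_A L p_A$ agrees with the one built from $A$ itself. This holds because both are defined fibrewise from the same subspaces $A_g$.

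For the converse, suppose $\phi$ is a graded isomorphism between the two stabilisations, and write $\tilde A = A \xt \Kk^{\operatorname{biv}}\xt\Kk^{\operatorname{uni}}$ and $\tilde B$ similarly. First I would show that $A$ is graded Morita equivalent to $\tilde A$. Take $q_A = 1_A \xt \mathbf{e}_{e,e}\xt \mathbf{e}_{1,1}$, a projection in $\Mm(\tilde A)$ lying in the multiplier of the $e$-fibre. Then $q_A\tilde A q_A = A \xt \mathbf{e}_{e,e}\xt\mathbf{e}_{1,1}$, and the bivalent grading on this corner is $A_{ehe\inv} = A_h$, so the corner is $A$ with its original grading. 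The bimodule $Y_A = q_A \tilde A$, with fibres $q_A\tilde A_g$, is then a graded $A$--$\tilde A$-imprimitivity bimodule: the grading axioms follow directly from $\tilde A_g\tilde A_h\subseteq \tilde A_{gh}$, and fullness on the right follows from fullness of $1_A\xt\mathbf{e}_{e,e}\xt\mathbf{e}_{1,1}$ in $A\xt\Kk\xt\Kk$. Next, $\phi$ transports $\tilde A$ to a graded $\tilde A$--$\tilde B$ bimodule $\tilde B$, with left action through $\phi$, and similarly $\tilde B$ is graded equivalent to $B$ via the dual of $Y_B$.

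It remains to show that graded Morita equivalence is transitive (and symmetric, via the conjugate module $\widetilde X$ with $\widetilde X_g = (X_{g\inv})\widetilde{\;}$). For transitivity, given graded $X$ and $Y$, I would grade the internal tensor product $X\xt_{B} Y$ by $(X\xt Y)_g = \clsp\bigcup_{hk=g} X_h\xt Y_k$ and verify the axioms. The action and inner-product axioms are routine computations. I expect the main obstacle to be the spanning condition, together with the fact that the definition requires nothing further such as independence. Spanning should hold because the elementary tensors $x_h\xt y_k$ span densely. One could worry that nonhomogeneous tensors cause trouble for the inner products, but since the axioms only constrain homogeneous pieces, the computation $\langle x_h\xt y_k, x'_{h'}\xt y'_{k'}\rangle = \langle y_k, \langle x_h,x'_{h'}\rangle y'_{k'}\rangle \in B_{k\inv h\inv h' k'}$ suffices. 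Chaining $A \sim \tilde A\sim\tilde B\sim B$ then gives the converse.
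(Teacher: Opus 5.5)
Your forward direction is the paper's argument: linking algebra, the full corner projections $p_A,p_B\in L(X)_e$, and two applications of Theorem~\ref{thm-BGR-gradedME}.

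For the converse your proof is correct but takes a longer route than the paper. You factor the equivalence as $A\sim\tilde A\sim\tilde B\sim B$. You then prove that graded Morita equivalence is symmetric, via $\widetilde X_g=(X_{g^{-1}})\widetilde{\;}$, and transitive, via the grading $\clsp\bigcup_{hk=g}X_h\otimes Y_k$ on $X\otimes_B Y$. Your degree bookkeeping for the actions and both inner products is right.

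The paper instead writes down the composite bimodule in one step. It takes the corner $q_A\tilde A\,\phi^{-1}(q_B)$, with $q_A,q_B$ the rank-one unit projections as in your proposal. Because $\phi$ is graded, $\phi^{-1}(q_B)$ is a full projection of $\tilde A$ lying in $\tilde A_e$. So this corner is graded by the subspaces $q_A\tilde A_g\,\phi^{-1}(q_B)$, and it is an imprimitivity bimodule between $q_A\tilde A q_A\cong A$ and $\phi^{-1}(q_B)\tilde A\phi^{-1}(q_B)\cong q_B\tilde B q_B\cong B$. Up to isomorphism this is exactly your tensor product $Y_A\otimes_{\tilde A}\tilde B\otimes_{\tilde B}\widetilde{Y_B}$, so both arguments produce the same bimodule.

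The paper's version is shorter and needs no transitivity lemma, though it leaves the grading verifications to the reader. Yours costs a bit more but spells those checks out. As a by-product it shows that graded Morita equivalence is an equivalence relation for arbitrary, not necessarily unital, topologically graded algebras, which the paper never states.
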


\begin{proof}
(${\Longrightarrow}$) Suppose that $X$ is a graded $A$--$B$-imprimitivity bimodule.  Then the associated linking algebra $L(X)$ is a topologically $\Gamma$-graded $C^*$-algebra. Let $p_A, p_B \in L(X)$ be the elements
\[
p_A = \left(\begin{matrix} 1_A & 0\\ 0 & 0\end{matrix}\right)
    \quad\text{ and }\quad
p_B = \left(\begin{matrix} 0 & 0\\ 0 & 1_B\end{matrix}\right).
\]
Then $p_A, p_B$ belong to $L(X)_e$ and are full projections in $L(X)$, and $A$ is graded isomorphic to $p_A L(X) p_A$ and $B$ is graded isomorphic to $p_B L(X) p_B$.  By Theorem~\ref{thm-BGR-gradedME},
\[
A \otimes \Kk(\ell^2(\Gamma))^{\operatorname{biv}} \otimes \Kk(\ell^2(\Gamma))^{\operatorname{uni}} \cong p_A L(X)p_A \otimes \Kk(\ell^2(\Gamma))^{\operatorname{biv}} \otimes \Kk(\ell^2(\Gamma))^{\operatorname{uni}}
\]
and
\[
B\otimes \Kk(\ell^2(\Gamma))^{\operatorname{biv}} \otimes \Kk(\ell^2(\Gamma))^{\operatorname{uni}} \cong p_B L(X)p_B \otimes \Kk(\ell^2(\Gamma))^{\operatorname{biv}} \otimes \Kk(\ell^2(\Gamma))^{\operatorname{uni}}.
\]
Thus,
\[
A \otimes \Kk(\ell^2(\Gamma))^{\operatorname{biv}} \otimes \Kk(\ell^2(\Gamma))^{\operatorname{uni}} \cong B\otimes \Kk(\ell^2(\Gamma))^{\operatorname{biv}} \otimes \Kk(\ell^2(\Gamma))^{\operatorname{uni}}.
\]

(${\Longleftarrow}$) Suppose that
\[
\psi : A  \otimes \Kk(\ell^2(\Gamma))^{\operatorname{biv}} \otimes \Kk(\ell^2(\Gamma))^{\operatorname{uni}} \to B \otimes  \Kk(\ell^2(\Gamma))^{\operatorname{biv}} \otimes \Kk(\ell^2(\Gamma))^{\operatorname{uni}}
\]
is a graded isomorphism. Then $\psi$ makes
\[
X = (1_A \otimes \mathbf{e}_{e,e} \otimes \mathbf{e}_{e,e})\left( A  \otimes \Kk(\ell^2(\Gamma))^{\operatorname{biv}} \otimes \Kk(\ell^2(\Gamma))^{\operatorname{uni}}\right) \psi^{-1} (1_B \otimes \mathbf{e}_{e,e} \otimes \mathbf{e}_{e,e})
\]
a graded $A$--$B$-imprimitivity bimodule.
\end{proof}

We now discuss a connection with coactions so that we can apply our results to gauge actions (see Corollary~\ref{010}). Given a discrete group $\Gamma$, we write $u_g$ for the image of $g \in \Gamma$ in $C^*(\Gamma)$. We write $\delta_\Gamma:\cstg\to \cstg\xt\cstg$ (we use the minimal tensor product) for the comultiplication such that $\delta_\Gamma(u_g) = u_g \otimes u_g$ for $g \in \Gamma$. Given a $C^*$-algebra $A$, a \emph{coaction} of $\Gamma$ on $A$ is a nondegenerate homomorphism $\delta:A\to A\xt\cstg$ satisfying $\clsp{\delta(A)(1\xt\cstg)}=A\xt\cstg$ and the coaction identity $(\delta \otimes 1) \circ \delta = (1 \otimes \delta_\Gamma) \circ \delta$. The spectral subspaces of $A$ are the spaces $A_g := \{a \in A : \delta(a) = a \otimes u_g\}$.

By \cite[Lemma~1.3]{JQ-JAust1996}, for each $g \in \Gamma$ there is a norm-decreasing linear map $\Phi_g : A \to A_g$ that
fixes $A_g$ pointwise and annihilates $A_h$ for $h \not=g$. Specifically, writing
$\operatorname{Tr}$ for the canonical trace on $C^*_r(\Gamma)$, the map $\Phi_g$ is given
by $\Phi_g(a) = (\operatorname{id}_A \otimes \operatorname{Tr})(\delta(a)(1_A \otimes
\lambda_{g^{-1}}))$ for $a \in A$. In particular $\Phi_e = E_A : A \to A_e$
is a conditional expectation.  By \cite[Lemma~1.3 and Lemma~1.5]{JQ-JAust1996}, the spectral subspaces $\{ A_g\}_{g \in \Gamma}$ and the conditional expectation $E_A$ make $A$ a topologically $\Gamma$-graded $C^*$-algebra.  We refer to this grading on $A$ as the \emph{grading induced by $\delta$}. The connections among gradings, Fell bundles, and coactions are deep (see \cite{JQ-JAust1996}, \cite{EKQR-MemoirsAMS2006}). However, there is a subtlety: while it is true that every coaction $(A,\delta)$ gives rise to a \topo\ grading, this is not reversible: there exist \topo\ gradings that do not arise from a coaction (see \cite[Remark~2.2]{echqui}). There is no problem if the coaction is normal---this corresponds to the associated \CE\ being faithful, and in particular there is no problem if $\Gamma$ is amenable.

We recall the theory of Morita equivalent coactions from \cite{EKQR-MemoirsAMS2006}.
Given coactions $(A,\delta)$ and $(B,\epsilon)$,
and an $A$--$B$-\ibm\ $X$,
a \emph{$\delta$--$\epsilon$ compatible coaction}
on $X$ is a
linear map $\zeta:X\to X\xt\cstg$ such that
for all $x,y\in X$, $a\in A$, and $b\in B$ we have
\begin{itemize}
    \item $\zeta(axb)=\delta(a)\zeta(x)\epsilon(b)$
    \item $\<\zeta(x),\zeta(y)\>_{B\xt \cstg}=\delta(x)^*\delta(y)$
    \item $(\zeta\xt\id)\circ\zeta=(\id\xt\deltag)\circ\zeta$.
\end{itemize}
Note that $\delta$ does map into $X\xt \cstg$---we don't need to work with multiplier bimodules---because $\Gamma$ is discrete.
As explained in \cite[Remark~2.11]{EKQR-MemoirsAMS2006},
a $\delta$--$\epsilon$ compatible coaction as above
automatically satisfies the following,
because $X$ is an \ibm:
\begin{itemize}
    \item $\pre{A\xt\cstg}\<\zeta(x),\zeta(y)\>=\delta(x)\delta(y)^*$
    \item $\clsp\{\zeta(X)(1\xt\cstg)\}=\clsp\{(1\xt\cstg)\zeta(X)\}=X\xt\cstg$.
\end{itemize}

Given a $\delta$--$\epsilon$ compatible coaction $\zeta$ on an $A$--$B$-\ibm\ $X$, let $L(X)=\smtx{A&X\\\wilde X&B}$ be the linking algebra. Then by \cite[Lemma~2.22]{EKQR-MemoirsAMS2006}, the map $\nu = \smtx{\delta&\zeta\\\wilde\zeta&\epsilon}$
is a coaction on $L(X)$ that is normal iff $\epsilon$ is.\footnote{We have used \cite[Section~1.5]{EKQR-MemoirsAMS2006} to identify $L(X)\xt \Kk$ with $L(X\xt \Kk)$, and we similarly identify $L(X)\xt \cstg$ with $L(X\xt \cstg)$.}

Our next goal is to relate Morita equivalence of $C^*$-algebras with coactions and our notion of graded Morita equivalence.  We show that they are equivalent when the group is a discrete amenable group.

\begin{thm}\label{thm-grME-coaction}
Let $\Gamma$ be a discrete, amenable group, let $A_1$ and $A_2$ be $C^*$-algebras, let $\delta_1$ be coaction of $\Gamma$ on $A_1$, and let $\delta_2$ be coaction of $\Gamma$ on $A_2$.
Then $(A_1, \delta_1)$ and $(A_2, \delta_2)$ are Morita equivalent if and only if $A_1$ and $A_2$ are graded Morita equivalent as topologically graded $C^*$-algebras under the gradings induced by $\delta_1$ and $\delta_2$.
\end{thm}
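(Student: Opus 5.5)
We prove the two implications separately. In both directions the linking algebra $L(X)$ is the link between the two notions, and amenability of $\Gamma$ is used only in the converse.

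For the forward direction, suppose $X$ is an $A_1$--$A_2$-imprimitivity bimodule carrying a $\delta_1$--$\delta_2$ compatible coaction $\zeta$. We set $X_g := \{x \in X : \zeta(x) = x\otimes u_g\}$ and check the three conditions of Definition~\ref{dfn:graded equivalence}.

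Condition~(1) follows from $\zeta(axb)=\delta_1(a)\zeta(x)\delta_2(b)$. For $a\in (A_1)_g$, $x\in X_h$ and $b\in (A_2)_{g'}$ this gives $\zeta(axb) = axb\otimes u_gu_hu_{g'}$.

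Condition~(2) follows from the inner product identities. For $x\in X_g$ and $y\in X_h$,
\[
\<\zeta(x),\zeta(y)\>_{A_2\xt\cstg}=\<x,y\>\otimes u_g^*u_h,
\]
and this must equal $\delta_2(\<x,y\>)$. Hence $\<x,y\>_{A_2}\in (A_2)_{g\inv h}$. The left inner product is handled in the same way and lands in $(A_1)_{gh\inv}$.

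Condition~(3), density of $\lsp\bigcup_g X_g$ in $X$, is the one point that needs an argument. The cleanest route is to pass to the linking algebra. By \cite[Lemma~2.22]{EKQR-MemoirsAMS2006}, $\nu=\smtx{\delta_1&\zeta\\\wilde\zeta&\delta_2}$ is a coaction on $L(X)$. The spectral subspaces of a coaction densely span the algebra by \cite[Lemma~1.3 and 1.5]{JQ-JAust1996}. Moreover $L(X)_g$ has upper right corner exactly $X_g$, and compressing by the corner projections, which are multipliers fixed by $\nu$, maps $L(X)_g$ onto its corners. So $X$ is the closed span of the $X_g$.

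For the converse, suppose $X$ is a graded $A_1$--$A_2$-imprimitivity bimodule. By Lemma~\ref{lem-linkalg-grd}, $L(X)$ is topologically graded with fibres $L(X)_g$. We plan four steps.

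\begin{enumerate}
\item Form the Fell bundle $\mathcal{B}=(L(X)_g)_{g\in\Gamma}$. Then $L(X)$ is generated by a representation of $\mathcal{B}$, so it is a quotient of $C^*(\mathcal{B})$ and has $C^*_r(\mathcal{B})$ as a quotient (Exel's theory of topological gradings).
\item Use amenability of $\Gamma$ to get $C^*(\mathcal{B})=C^*_r(\mathcal{B})$, so that $L(X)\cong C^*(\mathcal{B})$ as graded algebras.
\item Apply the dual coaction on $C^*(\mathcal{B})$ (see \cite{JQ-JAust1996}, \cite{EKQR-MemoirsAMS2006}). This gives a coaction $\nu$ on $L(X)$ with $\nu(c)=c\otimes u_g$ for $c\in L(X)_g$.
\item Cut down by the corner projections. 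Since $\nu$ fixes them, it restricts to coactions on the corners. These restrictions agree with $\delta_1$ and $\delta_2$, because both are determined on the dense spans of the fibres $(A_i)_g$ by $a\mapsto a\otimes u_g$.
\end{enumerate}

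The upper right corner then gives a map $\zeta:X\to X\xt\cstg$. The compatibility identities hold because $\nu$ is a homomorphism, and the coaction identity is inherited from that of $\nu$. Thus $\zeta$ is a $\delta_1$--$\delta_2$ compatible coaction, and $(A_1,\delta_1)$ and $(A_2,\delta_2)$ are Morita equivalent.

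The main obstacle is Step~2, identifying $L(X)$ with $C^*(\mathcal{B})$ so that the coaction exists on $L(X)$ itself and not just on some cover. Here amenability of $\Gamma$ is essential: topological gradings need not come from coactions, as noted above. We also need to check that the Fell bundle structure of $\mathcal{B}$ is compatible with the grading on $L(X)$, which is routine from Lemma~\ref{lem-linkalg-grd}.
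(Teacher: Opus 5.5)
Your proposal is correct and follows essentially the same route as the paper. In the forward direction, your $X_g=\{x : \zeta(x)=x\otimes u_g\}$ is exactly the paper's $Y_g = Y\cap L(X)_g$ for $Y=p_1L(X)p_2$, and you get density from the linking-algebra coaction just as the paper does. In the converse, you also identify $L(X)$ with $C^*(\mathcal{B})$ using Exel's results together with amenability, transport the canonical coaction, and restrict it to the upper right corner, so your explicit checks of conditions (1)--(3) only fill in details the paper leaves implicit.
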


\begin{proof}
Suppose $(A_1, \delta_1)$ and $(A_2, \delta_2)$ are Morita equivalent.  Then there exist an $A_1$--$A_2$-imprimivity bimodule $X$ and
a $\delta_1-\delta_2$ compatible coaction $\zeta$ on $X$.  By \cite[Lemma~2.22]{EKQR-MemoirsAMS2006} (as summarized above),
there exists a unique coaction $\nu$ on the linking algebra $L(X)$ of $X$ given by
\[
\nu\left(\begin{pmatrix} a & x \\ \widetilde{y} & b \end{pmatrix}\right) = \begin{pmatrix} \delta_1(a) & \zeta(x) \\ \widetilde{\zeta(y)} & \delta_2(b) \end{pmatrix}
\]
for all $a \in A_1$, for all $x,y \in X$, and for all $b \in A_2$.

Let $p_1, p_2 \in L(X)$ be the projections
\[
p_1 = \left(\begin{matrix} 1_{\mathcal{M}(A_1)} & 0\\ 0 & 0\end{matrix}\right)
    \quad\text{ and }\quad
p_2 = \left(\begin{matrix} 0 & 0\\ 0 & 1_{\mathcal{M}(A_2)}\end{matrix}\right).
\]
Under the gradings induced by the coactions $\delta_1$, $\delta_2$, and $\nu$, each $A_i$ is graded isomorphic to $p_i L(X) p_i$.  These isomorphisms make $Y =  p_1 L(X) p_2$ an $A_1$--$A_2$-imprimitivity bimodule, which inherits a grading from $L(X)$:
\[
Y_g =  Y \cap L(X)_g
\]
for all $g \in \Gamma$.  Thus, $Y$ is a graded $A_1$--$A_2$-imprimitivity bimodule which implies that $A_1$ and $A_2$ are graded Morita equivalent.

For the converse, assume that there exists a graded $A_1$--$A_2$-imprimitivity bimodule $X$.  By Lemma~\ref{lem-linkalg-grd}, the linking algebra $L(X)$ is a topologically graded $C^*$-algebra.  Let $\mathcal{B}$ be the associated Fell bundle over $\Gamma$ with fibers $L(X)_g$, and let $\Lambda \colon C^*(\mathcal{B}) \to C^*_r(\mathcal{B})$ be the left regular representation of $C^*(\mathcal{B})$ in $C_r^*(\mathcal{B})$. Then by \cite[Theorem~3.3]{Exel-Crelle1997} (see also \cite[Theorem~19.5]{ExelBook}), there are graded surjective $*$-homomorphisms $\lambda \colon L(X) \to C^*_r(\mathcal{B})$ and $\Phi \colon C^*(\mathcal{B}) \to L(X)$ such that $\Lambda = \lambda \circ \Phi$.  Since $\Gamma$ is a discrete amenable group, \cite[Theorem~4.7]{Exel-Crelle1997} implies that $\Lambda$ is an isomorphism. Hence $\lambda$ and $\Phi$ are isomorphisms.  Let $\pi$ be the universal representation of $\mathcal{B}$ in $C^*(\mathcal{B})$. Raeburn proved in \cite[Appendix~A]{IR-DefFell2016} that there is a coaction $\zeta$ of $\Gamma$ on $C^*(\mathcal{B})$ such that $\zeta(b) = \pi_g(b)\otimes u_g$ for $b \in L(X)_g$. Hence $(\Phi \otimes \operatorname{id}_{C^*_r(\Gamma)}) \circ \zeta \circ \Phi^{-1}$ is a coaction of $\Gamma$ on $L(X)$, which, by slight abuse of notation, we denote again by $\zeta$.

Write $p_1, p_2 \in \Mm(L(X))$ for the projections
\[
p_1 = \begin{pmatrix} 1_{\mathcal{M}(A_1)} & 0 \\ 0 & 0 \end{pmatrix}
    \quad\text{ and }\quad
p_2 = \begin{pmatrix} 0 & 0 \\ 0 & 1_{\mathcal{M}(A_2)} \end{pmatrix}.
\]
For $i = 1,2$, the embedding $A_i \hookrightarrow L(X)$ is an injective $\delta_i$--$\zeta$-equivariant homomorphism with range $p_i L(X) p_i$. These embeddings make $Y =  p_1 L(X) p_2$ into an $A_1$--$A_2$-imprimitivity bimodule, and the restriction of $\zeta$ to $Y$ makes it into a Morita equivalence between $(A_1, \delta_1)$ and $(A_2, \delta_2)$.
\end{proof}

Given a finite graph $E$, by \cite[Theorem~3.15]{RH-IJM2013} and \cite[Theorem~3]{CD-JAC2023}, $C^*(E)$ is a \sat graded $C^*$-algebra if and only if $E$ has no sinks. Hence, Corollary~\ref{cor-homogeneous for free} and Theorem~\ref{thm-grME-coaction} give the following.

\begin{cor}\label{010}
Let $E$ and $F$ be finite regular graphs.  The following are equivalent:
\begin{enumerate}
    \item $( C^*(E), \gamma^E)$ and $(C^*(F), \gamma^F)$ are Morita equivalent;
    \item $(C^*(E) \otimes \Kk, \gamma^E \otimes \operatorname{id}) \cong (C^*(F) \otimes \Kk, \gamma^F \otimes \operatorname{id})$; and
    \item For any actions $\rho_1, \rho_2$ of $\TT$ on $\Kk$ that pointwise fix the standard diagonal in $\Kk$,
    \[
        (C^*(E) \otimes \Kk \otimes \Kk, \gamma^E \otimes \rho_1 \otimes \operatorname{id}) \cong (C^*(F) \otimes \Kk \otimes \Kk, \gamma^F \otimes \rho_2 \otimes \operatorname{id}).
    \]
\end{enumerate}
\end{cor}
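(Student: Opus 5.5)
We treat the gauge action $\gamma^E$ of $\TT$ on $C^*(E)$ as a coaction of $\Gamma = \ZZ = \widehat{\TT}$: since $\ZZ$ is amenable, actions of $\TT$ and coactions of $\ZZ$ correspond bijectively, equivariant Morita equivalences correspond, and the grading induced by the coaction is the gauge grading $C^*(E)_n = \{a : \gamma_z(a) = z^n a\}$. A finite regular graph has no sinks, so $C^*(E)$ and $C^*(F)$ are unital \sat graded by the result cited just before the statement. The plan is to prove (1)$\Leftrightarrow$(2) using Theorem~\ref{thm-grME-coaction}, Corollary~\ref{cor-homogeneous for free} and Theorem~\ref{thm-BGR}, and then (2)$\Leftrightarrow$(3) using Corollary~\ref{cor-BGR-gradedME}.

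For (1)$\Rightarrow$(2), Theorem~\ref{thm-grME-coaction} with $\Gamma = \ZZ$ turns the equivariant Morita equivalence into a graded $C^*(E)$--$C^*(F)$ imprimitivity bimodule. Corollary~\ref{cor-homogeneous for free}, which applies because both algebras are unital and hence $\sigma$-unital, then gives a graded isomorphism of $C^*(E)\otimes\Kk$ and $C^*(F)\otimes\Kk$ with the univalent gradings. The univalent grading is the grading of the action $\gamma\otimes\id$, and for amenable $\ZZ$ a graded isomorphism is the same as a $\TT$-equivariant isomorphism: being graded means intertwining the spectral subspaces, and the $\TT$-action is recovered from them. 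For (2)$\Rightarrow$(1), an equivariant isomorphism is a graded isomorphism of the univalent gradings. Theorem~\ref{thm-BGR} then gives a homogeneous, in particular graded, Morita equivalence, and Theorem~\ref{thm-grME-coaction} converts it back to (1).

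For (3), let $\rho$ be an action of $\TT$ on $\Kk = \Kk(\ell^2(\ZZ))$ fixing the diagonal pointwise. I expect the key step to be showing that $(C^*(E)\otimes\Kk,\gamma^E\otimes\rho)$ is equivariantly isomorphic to the bivalent grading $C^*(E)\otimes\Kk(\ell^2(\ZZ))^{\operatorname{biv}}$, possibly after relabelling the basis. The idea is that $\rho$ fixes each $\mathbf{e}_{g,g}$, so $\rho_z(\mathbf{e}_{g,k}) = \chi_{g,k}(z)\,\mathbf{e}_{g,k}$ for characters $\chi_{g,k}$. Multiplicativity gives $\chi_{g,k} = \chi_g\overline{\chi_k}$, i.e.\ $\rho = \operatorname{Ad} U$ with $U_z\delta_g = z^{n(g)}\delta_g$. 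The bivalent grading puts $a\otimes\mathbf{e}_{g,k}$ with $a \in A_m$ in degree $m - g + k$, so it corresponds to $\gamma\otimes\operatorname{Ad}U$ with $n(g) = -g$.

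This is where I expect trouble. A general function $n:\ZZ\to\ZZ$ need not be a bijection, so the general $\rho$ is not literally bivalent. To handle this I would use the trailing factor $\Kk^{\operatorname{uni}}$. Since $\Kk\otimes\Kk$ with $\operatorname{Ad}U\otimes\id$ is $\operatorname{Ad}$ of a diagonal unitary whose multiplicity function has infinite fibres over the image of $n$, I would show that $C^*(E)\otimes\Kk\otimes\Kk$ with $\gamma\otimes\rho\otimes\id$ is graded Morita equivalent, through the full corners $1\otimes\mathbf{e}_{g,g}\otimes\mathbf{e}_{0,0}$ lying in the degree-zero part, to $(C^*(E),\gamma)$. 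Combined with Theorem~\ref{thm-BGR}, this yields an equivariant isomorphism with $(C^*(E)\otimes\Kk,\gamma\otimes\id)$, the case of constant $n$. Thus all the algebras in (3) are equivariantly isomorphic to those in (2), which gives (2)$\Leftrightarrow$(3). Alternatively, the case $n(g) = -g$ follows directly from Corollary~\ref{cor-BGR-gradedME} together with (1)$\Leftrightarrow$(2).
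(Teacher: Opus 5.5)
Your proof is correct. For (1)$\Leftrightarrow$(2) it is the paper's argument: saturation of $C^*(E)$ for finite graphs without sinks, Theorem~\ref{thm-grME-coaction} with $\Gamma=\ZZ$, and Corollary~\ref{cor-homogeneous for free} together with Theorem~\ref{thm-BGR}, with graded isomorphisms of univalent gradings read as $\TT$-equivariant isomorphisms.

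For (3) you go further than the paper. The paper's proof consists only of those citations. In the later Krieger-triple theorem it calls the analogous implication ``clear'' and returns via the corner cut down by $\varphi(1\otimes\mathbf{e}_{1,1}\otimes\mathbf{e}_{1,1})$. You correctly notice that when $\rho_1\neq\rho_2$ and $n$ is not a bijection, something must actually be proved. You prove it by showing that every $(C^*(E)\otimes\Kk\otimes\Kk,\gamma\otimes\rho\otimes\id)$ is equivariantly isomorphic to $(C^*(E)\otimes\Kk,\gamma\otimes\id)$, via a degree-zero corner and Theorem~\ref{thm-BGR}. That genuinely justifies the ``for any $\rho_1,\rho_2$'' clause and makes both directions of (2)$\Leftrightarrow$(3) immediate.

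You should make one step explicit. Theorem~\ref{thm-BGR} requires a \emph{homogeneous} equivalence, so the corner projection must be full in the degree-zero subalgebra, not merely in the whole algebra. Take $\rho=\operatorname{Ad}U$ with $U_z\delta_j=z^{n(j)}\delta_j$. The degree-zero part is spanned by the elements $a\otimes\mathbf{e}_{j,k}\otimes\mathbf{e}_{s,t}$ with $a\in C^*(E)_{n(k)-n(j)}$. Fullness of $1\otimes\mathbf{e}_{g,g}\otimes\mathbf{e}_{0,0}$ there amounts to density of $C^*(E)_{n(g)-n(j)}C^*(E)_{n(k)-n(g)}$ in $C^*(E)_{n(k)-n(j)}$. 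This is exactly where saturation (no sinks) enters, and without it the step fails. Equivalently, you can check that the twisted stabilization is again saturated and invoke Corollary~\ref{cor-homogeneous for free}.

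A slightly cleaner route is to apply Theorem~\ref{thm-BGR} to $C^*(E)$ and $B=(C^*(E)\otimes\Kk,\gamma\otimes\rho)$ with corner $1\otimes\mathbf{e}_{g,g}$. The univalent stabilization of $B$ is literally $(C^*(E)\otimes\Kk\otimes\Kk,\gamma\otimes\rho\otimes\id)$, so this avoids the extra identification $\Kk\otimes\Kk\cong\Kk$ that your three-fold version needs.
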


Let $A$ be an $n \times n$ matrix with entries in $\NN$. Let
\begin{align*}
\Delta_A &\coloneqq \left\{ v \in  \bigcap_{k=1}^\infty  \QQ^n A^k \ | \  v A^\ell \in \ZZ^n, \text{ for some } \ell \in \ZZ^{\geq 0} \right\},\quad\text{ and}\\
\Delta_A^+ &\coloneqq \left\{ v \in \bigcap_{k=1}^\infty  \QQ^n A^k   \ | \ v A^\ell \in \NN^n, \text{ for some } \ell \in \ZZ^{\geq 0} \right\},
\end{align*}
and let $\delta_A \colon \Delta_A \to \Delta_A$ be the automorphism $\delta_A( v) = vA$. Then $(\Delta_A, \Delta_A^+ , \delta_A)$ is called the Krieger Dimension triple of $A$.

\begin{thm}[{\cite[Corollary~4.3]{BK00}}]
Let $A$ and $B$ be primitive matrices with entries in $\NN_0$.  Let $\widetilde{\lambda}$ denote the regular representation of $\TT$ on $L^2(\TT)$. Then the following are equivalent.
\begin{enumerate}
    \item $( \Delta_A , \Delta_A^+, \delta_A) \cong ( \Delta_B , \Delta_B^+, \delta_B)$;
    \item $( \mathcal{O}_A \otimes \Kk( L^2(\mathbb{T}) ) , \gamma_A \otimes \operatorname{id}) \cong  ( \mathcal{O}_B \otimes \Kk( L^2(\mathbb{T}) ) , \gamma_B \otimes \operatorname{id})$;
    %\jq{$B$, $\gamma_B$?}
    \item $(\mathcal{O}_A \otimes \Kk( L^2(\mathbb{T}) ) , \gamma_A \otimes \operatorname{Ad} \widetilde{\lambda}) \cong (\mathcal{O}_B \otimes \Kk( L^2(\mathbb{T}) ) , \gamma_B \otimes \operatorname{Ad} \widetilde{\lambda})$;
    \item $(\mathcal{O}_A, \gamma_A)$ and $(\mathcal{O}_B, \gamma_B)$ are Morita equivalent; and
    \item $(\mathcal{O}_A \otimes \Kk \otimes \Kk, \gamma_A \otimes \rho_1 \otimes \operatorname{id}) \cong (\mathcal{O}_B \otimes \Kk \otimes \Kk , \gamma_B \otimes \rho_2\otimes \operatorname{id})$, for any actions $\rho_1, \rho_2$ of $\TT$ on $\Kk$ that pointwise fix the standard diagonal in $\Kk$.
\end{enumerate}
\end{thm}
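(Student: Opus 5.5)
The plan is to split the five conditions into the part that is genuinely symbolic dynamics and the part that follows from the graded Morita theory above. The equivalence (1)$\Leftrightarrow$(2) is where Krieger's dimension triple enters. I would take it from Bratteli--Kishimoto (and ultimately Krieger and Cuntz--Krieger). The idea there is that the fixed-point algebra $\mathcal{O}_A^{\gamma_A}$ is an AF algebra. Stably, its $K_0$-group with positive cone and the automorphism induced by the dual action recovers $(\Delta_A,\Delta_A^+,\delta_A)$. Takai duality and Elliott's classification of AF algebras (with automorphisms) then turn an isomorphism of triples into an equivariant stable isomorphism, and conversely. I would not reprove this; the new content is connecting (2), (3), (4) and (5) using Corollary~\ref{010}.

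First I would realise $\mathcal{O}_A$ as $C^*(E_A)$ for the graph $E_A$ with adjacency matrix $A$. Then $\gamma_A$ is the gauge action, and the corresponding $\ZZ$-grading is the grading induced by the dual coaction (for $\TT$-actions this is just the spectral-subspace decomposition, and $\ZZ$ is amenable). Since $A$ is primitive, $E_A$ is finite with no sinks and no sources. Hence $C^*(E_A)$ is saturated graded by \cite[Theorem~3.15]{RH-IJM2013}, which is exactly the hypothesis actually used in the proof of Corollary~\ref{010}. I would check that regularity is needed there only through the absence of sinks. With that in hand, Corollary~\ref{010} gives (4)$\Leftrightarrow$(2)$\Leftrightarrow$(5) directly, since $\Kk(L^2(\TT))\cong\Kk$ and the gauge-action formulation is the $\ZZ$-grading formulation.

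For (3), I would diagonalise $\widetilde\lambda$ in the Fourier basis $\{e_n\}_{n\in\ZZ}$ of $L^2(\TT)$. There $\widetilde\lambda_z e_n=z^n e_n$, so $\operatorname{Ad}\widetilde\lambda_z(\mathbf{e}_{m,n})=z^{m-n}\mathbf{e}_{m,n}$. Thus $\operatorname{Ad}\widetilde\lambda$ pointwise fixes the standard diagonal; in grading language it is precisely the bivalent-type grading on $\Kk(\ell^2(\ZZ))$. To compare (3) with (5), I would tensor both sides of (3) with $(\Kk,\operatorname{id})$ and identify $\Kk(L^2(\TT))\otimes\Kk$ with $\Kk\otimes\Kk$. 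This shows (3)$\Rightarrow$(5) with $\rho_1=\rho_2=\operatorname{Ad}\widetilde\lambda$, and (5)$\Rightarrow$(4) was already obtained.

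The main obstacle is the direction (4)$\Rightarrow$(3), which must remove the extra $\Kk$ factor. Here I would use that $\operatorname{Ad}\widetilde\lambda$ is implemented by the strictly continuous unitary cocycle $z\mapsto 1\otimes\widetilde\lambda_z\in\Mm(\mathcal{O}_A\otimes\Kk)$. Hence $\gamma_A\otimes\operatorname{Ad}\widetilde\lambda$ is exterior equivalent to $\gamma_A\otimes\operatorname{id}$. Combining this with (2), I obtain an isomorphism $\mathcal{O}_A\otimes\Kk\to\mathcal{O}_B\otimes\Kk$ intertwining actions that are exterior equivalent to $\gamma_A\otimes\operatorname{Ad}\widetilde\lambda$ and $\gamma_B\otimes\operatorname{Ad}\widetilde\lambda$. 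Exterior equivalence alone does not give conjugacy, so I would first try to absorb the cocycle. The natural candidate is the unitary $W$ on $\ell^2(\ZZ)\otimes L^2(\TT)$-type spaces that transports the diagonal action to a $\Kk$-stable copy, in the spirit of Theorem~\ref{thm-BGR-gradedME}, which shows that the bivalent and univalent factors can be interchanged up to graded isomorphism. If that fails, I would fall back on Bratteli--Kishimoto's own argument: using primitivity, the action $\gamma_A\otimes\operatorname{id}$ on the stable algebra already absorbs $\operatorname{Ad}\widetilde\lambda$ up to conjugacy.
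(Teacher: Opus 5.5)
Apart from condition~(3), your plan is the paper's proof. The dimension-triple part comes from \cite[Corollary~4.3]{BK00}. The equivalences (2)$\Leftrightarrow$(4)$\Leftrightarrow$(5) come from saturation of the gauge grading together with Corollary~\ref{cor-homogeneous for free} and Theorem~\ref{thm-grME-coaction}, which is all that Corollary~\ref{010} packages; your check that only the absence of sinks matters is right. The implication (5)$\Rightarrow$(4) is the corner bimodule, and your (3)$\Rightarrow$(5)$\Rightarrow$(4) via the Fourier basis is also fine. The difference is that the paper takes~(3) from \cite[Corollary~4.3]{BK00} together with (1) and (2). So (4)$\Rightarrow$(3) is simply (4)$\Rightarrow$(2)$\Rightarrow$(1)$\Rightarrow$(3), and there is no ``main obstacle''; you should cite it the same way.

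As written, your own route to (4)$\Rightarrow$(3) does not close. Exterior equivalence gives no conjugacy, as you note. Theorem~\ref{thm-BGR-gradedME} also does not let you interchange bivalent and univalent factors: its conclusion always carries an extra $\Kk^{\operatorname{uni}}$. So the paper's graded machinery alone reaches only (5) with $\rho_1=\rho_2=\operatorname{Ad}\widetilde\lambda$, never (3). The two factors are not interchangeable in general: for $\mathbb{C}$ with the trivial grading, the zero fibres of $\mathbb{C}\otimes\Kk^{\operatorname{biv}}$ and $\mathbb{C}\otimes\Kk^{\operatorname{uni}}$ are $c_0(\ZZ)$ and $\Kk$.

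If you want to avoid citing \cite{BK00} at this step, the missing ingredient is stability of the zero fibre of $\mathcal{O}_A\otimes\Kk^{\operatorname{biv}}$, namely $(\mathcal{O}_A\otimes\Kk)^{\gamma_A\otimes\operatorname{Ad}\widetilde\lambda}\cong\mathcal{O}_A\rtimes_{\gamma_A}\TT$. For primitive $A$ this algebra is a stable AF algebra, isomorphic to $\mathcal{O}_A^{\gamma_A}\otimes\Kk$. Given that, Brown's theorem supplies a zero-graded isometry that absorbs the extra $\Kk^{\operatorname{uni}}$. Combined with saturation and Theorem~\ref{thm-BGR}, this proves the absorption you attribute to Bratteli--Kishimoto, namely $(\mathcal{O}_A\otimes\Kk,\gamma_A\otimes\operatorname{id})\cong(\mathcal{O}_A\otimes\Kk,\gamma_A\otimes\operatorname{Ad}\widetilde\lambda)$, and hence (2)$\Leftrightarrow$(3) directly.
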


\begin{proof}
First note that $\mathcal{O}_A$ and $\mathcal{O}_B$ are \sat graded $C^*$-algebras.  Corollary~\ref{cor-homogeneous for free} and \cite[Corollary~4.3]{BK00} imply that~(1), (2), (3)~and~(4) are equivalent.  It is clear that~(2) implies~(5).  For (5)~implies~(4), let $\varphi \colon \mathcal{O}_A \otimes \Kk \otimes \Kk \to \mathcal{O}_B \otimes \Kk \otimes \Kk$ be an isomorphism that intertwines $\gamma_A \otimes \rho_1 \otimes \operatorname{id}$ and $\gamma_B \otimes \rho_1 \otimes \operatorname{id}$. Then $\varphi(1_{\mathcal{O}_A \otimes \mathbf{e}_{1,1} \otimes \mathbf{e}_{1,1} }) \left( \mathcal{O}_B \otimes \Kk \otimes \Kk\right)(1_{\mathcal{O}_B \otimes \mathbf{e}_{1,1} \otimes \mathbf{e}_{1,1} })$ is an  $\mathcal{O}_A$--$\mathcal{O}_B$-imprimitivity bimodule.
\end{proof}

\section{Incorporating Cartan subalgebras}

In this section we describe how the results of the preceding section can be modified to incorporate the presence of a Cartan subalagebra with totally disconnected spectrum in each of the $C^*$-algebras $A$ and $B$. Specifically, we show that given a graded Morita equivalence from $(A_1, D_1)$ to $(A_2, D_2)$, there is a diagonal-preserving stable isomorphism that intertwines the univalent gradings.

Our strategy is first to use the argument of the preceding section to obtain a diagonal-preserving Morita equivalence between the $0$-graded subalgebras of $A$ and $B$. We then appeal to Matsumoto's work to find an isometry that implements a stable isomorphism of Cartan pairs at the level of 0-graded components. This same isometry implements a stable isomorphism of the original Cartan pairs; and it is a graded isomorphism (with respect to the univalent grading) because the implementing isometry is 0-graded.

\begin{dfn}\label{dfn-moritaeq-diag}
Let $A_1, A_2$ be $C^*$-algebras and let $D_i$ be a subalgebra of $A_i$ for $i=1,2$.  We say that $(A_1, D_1)$ and $(A_2, D_2)$ are \emph{Morita equivalent} provided that there is an $A_1$--$A_2$-imprimitivity bimodule such that
\[
X = \clsp \{ x \in X : \langle x , D_1 \cdot x \rangle_{A_2} \subseteq D_2 \quad \text{and} \quad _{A_1}\langle x \cdot D_2, x \rangle \subseteq D_1 \}
\]
We call $X$ an \emph{$(A_1, D_1)$--$(A_2, D_2)$-imprimitivity bimodule}.
\end{dfn}

The following technical lemma is the key to our approach.

\begin{lem}\label{lem:nice frame CStar version}
Let $A_1, A_2$ be separable $C^*$-algebras, and suppose that $D_i \subseteq A_i$ are $\sigma$-unital abelian subalgebras with totally disconnected spectra. Suppose that $X$ is an $(A_1, D_1)$--$(A_2, D_2)$-imprimitivity bimodule, so that
\[
N_X := \{x \in X : {_{A_1}\langle x \cdot D_2, x\rangle}\subseteq D_1\text{ and }
                    {\langle x, D_1 \cdot x\rangle_{A_2}}\subseteq D_2\}
\]
densely spans $X$. Suppose that $\clsp\{\langle x, x\rangle_{A_2} : x \in N_X\} = D_2$. Then there is a sequence $(x_i)_i$ in $X$ such that
\begin{enumerate}
    \item $\langle x_i, D_1 \cdot x_i\rangle_{A_1} \in D_2$ for all $i$,
    \item $\sum^\infty_{i=1} \langle x_i, x_i\rangle_{A_2} = 1_{\Mm(D_2)}$ in the strict topology, and
    \item $\langle x_i, x_j\rangle_{A_2} = 0$ for $i \not= j$. %\jq{$\<x_i,x_j\>$}
\end{enumerate}
\end{lem}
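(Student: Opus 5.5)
The plan is to build the frame $(x_i)$ by cutting elements of $N_X$ down with projections in $D_2$. First, I would show that $N_X$ is closed under right multiplication by $D_2$. If $x \in N_X$ and $d \in D_2$, then
\[
\langle xd, D_1 \cdot xd\rangle_{A_2} = d^*\langle x, D_1\cdot x\rangle_{A_2} d \subseteq D_2.
\]
Likewise ${}_{A_1}\langle xd\cdot D_2, xd\rangle = {}_{A_1}\langle x\cdot dD_2d^*, x\rangle \subseteq D_1$, because $dD_2d^* \subseteq D_2$ since $D_2$ is abelian.

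Write $\widehat{D_2}$ for the spectrum of $D_2$. It is totally disconnected, and $\sigma$-compact because $D_2$ is $\sigma$-unital. So $D_2 \cong C_0(\widehat{D_2})$ is densely spanned by characteristic functions $1_U$ of compact open sets $U$. Since $\widehat{D_2}$ is $\sigma$-compact and totally disconnected, I would also write it as a countable disjoint union of compact open sets $V_1, V_2, \dots$, using the standard ``disjointification'' of an exhausting sequence of compact open sets.

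The heart of the argument is to produce, for each compact open $U$, a finite family in $N_X$ whose inner products are mutually orthogonal and sum to $1_U$. Since $\clsp\{\langle x,x\rangle_{A_2} : x\in N_X\} = D_2$, the functions $\langle x,x\rangle_{A_2}$ with $x \in N_X$ have no common zero. By compactness I can pick $y_1,\dots,y_m \in N_X$ with $\sum_k \langle y_k,y_k\rangle_{A_2} > 0$ on $U$. Each open set $\{\langle y_k, y_k\rangle_{A_2} > 0\}\cap U$ is a union of compact open sets. Refining these, I obtain a partition of $U$ into finitely many disjoint compact open $W_1,\dots,W_r$ such that for each $j$ there is a $y_{k(j)}$ whose inner product $f_j := \langle y_{k(j)},y_{k(j)}\rangle_{A_2}$ is bounded below by a positive constant on $W_j$.

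On $W_j$ I would let $h_j \in D_2$ be the function equal to $f_j^{-1/2}$ on $W_j$ and $0$ elsewhere; it is continuous because $W_j$ is clopen. Put $z_j := y_{k(j)} h_j$. Then $z_j \in N_X$ by the first step, and $\langle z_j,z_j\rangle_{A_2} = h_j f_j h_j = 1_{W_j}$.

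For orthogonality, $\langle z_j,z_l\rangle_{A_2} = h_j \langle y_{k(j)}, y_{k(l)}\rangle_{A_2} h_l$, and this need not be zero, since only the diagonal inner products are known to lie in $D_2$. I plan to get around this without using the left condition. I would replace $z_j$ by $z_j 1_{W_j}$ and note that $\langle z_j, z_j\rangle_{A_2} = 1_{W_j}$ forces $z_j = z_j 1_{W_j}$. I would then try to kill the cross terms using $1_{W_j}1_{W_l} = 0$. That gives
\[
\langle z_j,z_l\rangle_{A_2} = 1_{W_j}\langle z_j,z_l\rangle_{A_2}1_{W_l},
\]
which is not automatically zero. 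Expecting exactly this failure, my first fallback is a Gram--Schmidt-type correction: subtract from $z_l$ its components along the earlier $z_j$ (each $z_j$ is a partial isometry-like element, since $\langle z_j, z_j\rangle_{A_2}$ is a projection), and then check via the normalizer-type condition (1) that the corrected elements stay in $N_X$.

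If the correction does not stay inside $N_X$, my alternative is to be less ambitious. Condition (3) only requires orthogonality, so I would use the left inner product condition ${}_{A_1}\langle x\cdot D_2,x\rangle\subseteq D_1$ to pass to the $D_1$ side, where the elements ${}_{A_1}\langle z_j,z_j\rangle$ are projections in $D_1$. I would then cut the later vectors on the left by the complements of the earlier ranges, using $p$ with $pz_j=0$ for $p = 1-\bigvee_{i<j}{}_{A_1}\langle z_i,z_i\rangle$. This kills the cross terms, and the condition $\langle x, D_1\cdot x\rangle_{A_2}\subseteq D_2$ keeps the diagonal terms in $D_2$. It also requires redoing the covering to restore the sum $1_U$, by an exhaustion argument.

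Finally, I would apply the $U$-step to each $V_n$ and concatenate the finite families into one sequence $(x_i)$. Condition (1) holds because each $x_i \in N_X$. Condition (3) holds within each block by the construction and across blocks because the $V_n$ are disjoint. Condition (2) holds because the partial sums are $1_{V_1\cup\dots\cup V_n}$, which converge strictly to $1_{\Mm(D_2)}$.

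I expect the main obstacle to be the orthogonality in (3) across the pieces of a partition.
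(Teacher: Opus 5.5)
\textbf{The gap is condition (3), and it cannot be filled as stated.} Your treatment of (1) and (2) is sound and is essentially the paper's construction: normalise elements of $N_X$ by $1_W\langle y,y\rangle_{A_2}^{-1/2}$ on compact open sets $W$, then disjointify a countable cover. The failure is in (3), exactly where you expected. Neither of your fallbacks can close it, because (3) with the $A_2$-valued inner product is false under the stated hypotheses.

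For a counterexample, take $A_1=D_1=\mathbb{C}$ and $A_2=M_2(\mathbb{C})$ with $D_2$ the diagonal matrices. Let $X=M_{1,2}(\mathbb{C})$ with ${}_{A_1}\langle x,y\rangle=xy^*$ and $\langle x,y\rangle_{A_2}=x^*y$. Then $N_X$ is the set of row vectors supported in a single coordinate. It spans $X$, and $\clsp\{\langle x,x\rangle_{A_2}:x\in N_X\}=D_2$. However, $\langle x_i,x_j\rangle_{A_2}=x_i^*x_j$ is a rank-one matrix whenever $x_i,x_j\neq 0$. So (3) allows at most one nonzero $x_i$, and then $\sum_i\langle x_i,x_i\rangle_{A_2}$ has rank at most one and cannot equal $1$.

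In this example, with $z_1=(1,0)$, every $w\in X$ satisfying $\langle z_1,w\rangle_{A_2}=0$ is zero. So no Gram--Schmidt correction, and no left cut-down followed by exhaustion, can produce a second nonzero vector. Indeed $1-{}_{A_1}\langle z_1,z_1\rangle=0$. Your final paragraph also claims orthogonality across blocks ``because the $V_n$ are disjoint''. That is the same inference you had correctly rejected inside a single block.

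The paper's own proof does not get around this either. It writes $\langle x_i,x_j\rangle=p_i\langle x_i,x_j\rangle p_j=p_ip_j\langle x_i,x_j\rangle$ ``because $D_2$ is abelian''. That step treats $\langle x_i,x_j\rangle_{A_2}$ as an element of $D_2$, but only $\langle x,D_1\cdot x\rangle_{A_2}$ for a single $x\in N_X$ is known to lie there.

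What your construction (and the paper's) does establish is orthogonality for the other inner product. Since $x_i=x_i\cdot 1_{W_i}$ with the $W_i$ pairwise disjoint, we get ${}_{A_1}\langle x_i,x_j\rangle={}_{A_1}\langle x_i\cdot 1_{W_i}1_{W_j},x_j\rangle=0$ for $i\neq j$. So the statement must be amended before any proof can succeed, for instance by replacing (3) with this left-inner-product version. In that form your argument gives (1)--(3) with no correction step at all.
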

\begin{proof}
Throughout this proof, we freely identify $D_2$ with $C_0(\widehat{D}_2)$ via the Gelfand transform.  Fix $\phi \in \widehat{D}_2$. Since $\clsp\{\langle x, x\rangle_{A_2} : x \in N_X\} = D_2$, there exists $y_\phi \in N_X$ such that $\langle y_\phi, y_\phi\rangle_{A_2}(\phi) = 1$. Since $\widehat{D}_2$ is totally disconnected, there is a compact open neighbourhood $U_\phi$ of $\phi$ such that $\langle y_\phi, y_\phi\rangle_{A_2}(\psi) \ge \frac{1}{2}$ for all $\psi \in U_\phi$. Define $d_\phi \in D_2$ by $d(\psi) = \mathbbm{1}_{U_\phi}(\psi) \langle y_\phi, y_\phi\rangle_{A_2}(\psi)^{-1/2}$. Then $\langle y_\phi \cdot d_\phi, y_\phi \cdot d_\phi\rangle_{D_2} = \mathbbm{1}_{U_\phi}$.

The sets $U_\phi, \phi \in \widehat{D}_2$ obtained from the preceding paragraph form an open cover of $\widehat{D}_2$ by compact open sets. Since $D_2$ is separable, $\widehat{D}_2$ is second countable, and so there is a sequence $(\phi_i)_i$ in $\widehat{D}_2$ such that $(U_{\phi_i})_{i=1}^\infty$ covers $\widehat{D}_2$. Define a sequence $p_i \in D_2$ inductively by $p_1 = \mathbbm{1}_{U_{\phi_1}}$, and $p_{i+1} = \mathbbm{1}_{U_{\phi_{i+1}}} - \sum^i_{j=1} p_j$. Then the $p_i$ are mutually orthogonal, and $p_i \le \langle y_{\phi_i} \cdot d_{\phi_i}, y_{\phi_i} \cdot d_{\phi_i}\rangle_{D_2}$ for all $i$. Put $x_i = y_{\phi_i} \cdot d_{\phi_i} p_i$ for each $i$. Then $\langle x_i, x_i\rangle_{D_2} = p_i$ for all $i$.

We check that the $x_i$ have the desired properties. For each $i$ we have
\begin{align*}
\langle x_i, D_1 \cdot x_i\rangle_{A_1}
    &= \langle y_{\phi_i} \cdot d_{\phi_i} p_i, D_1 y_{\phi_i} \cdot d_{\phi_i} p_i\rangle_{A_1}\\
    &= (d_{\phi_i} p_i)^* \langle y_{\phi_i}, D_1 y_{\phi_i}\rangle_{D_2} d_{\phi_i} p_i
    \subseteq (d_{\phi_i} p_i)^* D_2 d_{\phi_i} p_i \subseteq D_2,
\end{align*}
which gives~(1). Since $\bigcup_i \supp p_i = \bigcup_i U_i = \widehat{D}_2$ and the $p_i$ are mutually orthogonal, the series $\sum_i p_i$ converges strictly to $1_{\Mm(D_2)}$, giving~(2). For~(3), fix $i \not= j$. Since $p_i$ is a projection, we have $x_i = y_{\phi_i} \cdot d_{\phi_i} p_i = y_{\phi_i} \cdot d_{\phi_i} p_i^2 = x_i \cdot p_i$, and similarly for $x_j$, and so $\langle x_i, x_j\rangle_{D_2} = \langle x_i\cdot p_i, x_j \cdot p_j\rangle_{D_2} = p_i \langle x_i, x_j\rangle_{D_2} p_j = p_i p_j \langle x_i, x_j\rangle_{D_2}$ because $D_2$ is abelian. Since the $p_i$ are mutually orthogonal, we deduce that $\langle x_i, x_j\rangle_{D_2} = 0$, giving~(3).
\end{proof}

For an abelian subalgebra $D$ of a graded $C^*$-algebra $A$, we write $N_g(D)$ for the set of all homogeneous normalizers of $D$ of degree $g$; that is, $N_g(D)$ is the set of elements $u \in A_g$ satisfying
\[
u D u^* \cup u^* D u \subseteq D.
\]
We write $N_*(D)$ for the union $\bigcup_g N_g(D)$.

We denote the abelian subalgebra of $\Kk$ consisting of diagonal operators in $\Kk$ by $\mathcal{C}$.

\begin{thm}\label{thm-cartan-imprimitivity}
  Let $A_1$ and $A_2$ be separable graded $C^*$-algebras and let $D_i$ be an abelian subalgebra of $A_i$ that contains an approximate identity for $A_i$ and satisfies $A_i = \overline{\operatorname{span}}N_*(D_i)$.
  Suppose that $D_1$ and $D_2$ have totally disconnected spectra, and that $A_1$ and $A_2$ are topologically graded.  Then the followiing are equivalent:
  \begin{enumerate}
      \item  there exists a graded imprimitivity $A_1$--$A_2$-bimodule $X$ such that
        \begin{equation}\label{eq:N spans X}
            X_g = \clsp\{ x \in X_g :  {}_{A_1} \langle x \cdot D_2, x \rangle \subseteq D_1 \text{ and } \langle x, D_1 \cdot x \rangle_{A_2} \subseteq D_2\}
        \end{equation}
        for all $g \in \Gamma$ and such that $X_e$ is an $(A_1)_e$--$(A_2)_e$-imprimitivity bimodule; and
        \item there exists a graded isomorphism between $A_1 \otimes \Kk^{\operatorname{uni}}$ and $A_2 \otimes \Kk^{\operatorname{uni}}$ that preserves the abelian subalgebras $D_1 \otimes \mathcal{C}$ and $D_2 \otimes \mathcal{C}$. %\jq{Remove mathcal? What is $\mathcal C$?}.
    \end{enumerate}
\end{thm}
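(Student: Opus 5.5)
The easy direction is (2)$\Rightarrow$(1). I will adapt the last paragraph of the proof of Theorem~\ref{thm-BGR}. Let $\varphi\colon A_1\otimes\Kk^{\operatorname{uni}}\to A_2\otimes\Kk^{\operatorname{uni}}$ be a graded isomorphism with $\varphi(D_1\otimes\mathcal{C})=D_2\otimes\mathcal{C}$. Put $p=\varphi(1_{\Mm(A_1)}\otimes\mathbf{e}_{1,1})$ and $q=1_{\Mm(A_2)}\otimes\mathbf{e}_{1,1}$. Both are degree-$e$ projections in $\Mm(D_2\otimes\mathcal{C})$, because $D_i$ contains an approximate identity for $A_i$. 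Set $X=p(A_2\otimes\Kk)q$ with $X_g=p(A_2\otimes\Kk)_gq$. Theorem~\ref{thm-BGR} already shows that $X$ is a graded imprimitivity bimodule with $X_e$ an $(A_1)_e$--$(A_2)_e$-imprimitivity bimodule. For \eqref{eq:N spans X}, the spanning elements are $p(n\otimes \mathbf{e}_{i,j})q$ with $n\in N_g(D_2)$. More generally, $p\,u\,q$ with $u$ a homogeneous normalizer of $D_2\otimes\mathcal{C}$ in $A_2\otimes\Kk$; these span $(A_2\otimes\Kk)_g$. Since $p,q$ lie in $\Mm(D_2\otimes\mathcal{C})$, the compressions $pu q$ are still normalizers. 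Their inner products $\langle x,(D_1\otimes \mathbf{e}_{11})x\rangle$ and ${}_{A_1}\langle x(D_2\otimes\mathbf{e}_{11}),x\rangle$ therefore land in the diagonals, after transporting by $\varphi$.

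For (1)$\Rightarrow$(2) I follow the strategy announced at the start of the section. First, restrict to the degree-$e$ part. The bimodule $X_e$ is an $(A_1)_e$--$(A_2)_e$-imprimitivity bimodule, and the $g=e$ case of \eqref{eq:N spans X} says $N_{X_e}$ densely spans $X_e$. I need $\clsp\{\langle x,x\rangle:x\in N_{X_e}\}=D_2$, and symmetrically for $D_1$. For this, observe that $\langle x,x\rangle=\langle x,1\cdot x\rangle\in D_2$, approximating $1$ by an approximate unit in $D_1$. Fullness of $X_e$, together with $\langle N_{X_e}\cdot d, N_{X_e}\cdot d\rangle$ for $d\in D_2$ (note $N_X D_2\subseteq N_X$), then gives the span of an ideal of $D_2$ that is dense in $D_2$.

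Next, apply Lemma~\ref{lem:nice frame CStar version} to $X_e$ (with $(A_i)_e$ in place of $A_i$). This gives orthogonal $x_i\in N_{X_e}$ with $\sum_i\langle x_i,x_i\rangle=1_{\Mm(D_2)}$. The symmetric argument gives $y_j$ with $\sum_j{}_{A_1}\langle y_j,y_j\rangle=1_{\Mm(D_1)}$. Using these frames I build a Brown-type isometry. Define $V\colon A_2\otimes\ell^2\to A_1\otimes\ell^2$, or rather a partial isometry in $\Mm(L(X)_e\otimes\Kk)$, by $V=\sum_i \theta_{x_i}\otimes \mathbf{e}_{i,1}$, viewed inside the linking algebra. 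Then $V^*V=1_{A_2}\otimes\mathbf{e}_{1,1}$-type, and $VV^*$ is a diagonal projection in $\Mm(D_1\otimes\mathcal{C})$. Combining the two frames with a standard Eilenberg/Matsumoto-style swindle on $\Kk\cong\Kk\otimes\Kk$ should yield a unitary $W\in\Mm(L(X)_e\otimes\Kk)$ with $W(p_2\otimes1)W^*=p_1\otimes1$. This $W$ is a strict limit of degree-$e$ elements. Conjugating by it normalizes the diagonal, since each $x_i$ is a normalizer and the matrix units are diagonal-normalizing. As in the first paragraph of the proof of Theorem~\ref{thm-BGR}, $\operatorname{Ad}_W$ maps $(A_2\otimes\Kk)_g$ into $(A_1\otimes\Kk)_g$, and it maps $D_2\otimes\mathcal{C}$ onto $D_1\otimes\mathcal{C}$.

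The main obstacle is the last step. Showing that $\operatorname{Ad}_W$ carries $D_2\otimes\mathcal{C}$ \emph{onto} $D_1\otimes\mathcal{C}$, rather than merely into it, requires an exact projection-equivalence bookkeeping with both frames. Here I would invoke Matsumoto's result for Cartan pairs with totally disconnected spectra at the level of $((A_i)_e,D_i)$, using the isometry constructed from the frames. The property $x_iD_2x_i^*\subseteq D_1$ relies on $x_i\in N_X$, and ${}_{A_1}\langle x\cdot D_2,x\rangle\subseteq D_1$ is exactly the condition needed there.
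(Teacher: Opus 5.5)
Your proposal is correct and follows essentially the same route as the paper. For (2)$\Rightarrow$(1) you compress with $p=\varphi(1\otimes\mathbf{e}_{1,1})$ and $q=1\otimes\mathbf{e}_{1,1}$; your remark that compressions of normalizers by projections in $\Mm(D_2\otimes\mathcal{C})$ remain normalizers is a cleaner form of the paper's ``direct calculation''. For (1)$\Rightarrow$(2) you show $\clsp\{\langle x,x\rangle_{A_2}: x\in N_{X_e}\}=D_2$, apply Lemma~\ref{lem:nice frame CStar version} to $X_e$ and its conjugate, and invoke Matsumoto's result to get zero-graded isometries in $\Mm(L(X)_e\otimes\Kk)$ whose conjugations are graded and diagonal-preserving, exactly as the paper does. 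The only difference is in the density step: you argue that $J=\clsp\{\langle x,x\rangle: x\in N_{X_e}\}$ is a closed ideal of $D_2$ and use fullness, while the paper uses $\langle x,y\rangle^*\langle x,y\rangle=\langle dy,dy\rangle$ with $d={}_{A_1}\langle x,x\rangle^{1/2}$. To finish your version, note that $x\in\overline{xJ}$ for $x\in N_{X_e}$, so $X_e=\overline{X_eJ}$ and an approximate unit for $J$ is one for $(A_2)_e\supseteq D_2$, which forces $J=D_2$.
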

\begin{proof}
\mbox{(2)\;$\implies$\;(1).} Suppose that there is a graded diagonal-preserving isomorphism $\varphi \colon A_1 \otimes \Kk \to A_2 \otimes \Kk$ that is graded with respect to the univariant gradings. Let $\{ \mathbf{e}_{i,j}\}$ be the standard system of matrix units for $\Kk$ (since we are using univalent gradings, we can index the matrix units by $\NN$). Let $p \coloneqq \varphi(1_{A_1} \otimes \mathbf{e}_{1,1})$ and $q \coloneqq 1_{A_2} \otimes \mathbf{e}_{1,1}$, and let $X \coloneqq p(A_2 \otimes \Kk)q$ with the standard inner products and actions. Fix $g \in \Gamma$. Since $A_2 = \clsp N_*(D_2)$, we have $(A_2)_g = \clsp N_g(D_2)$, %\jq{Similarly, I don't know what ``$N_g(D_2)$'' is.},
and hence $X_g = \clsp p(N_g(D_2) \otimes \Kk)q$.  A direct calculation shows that if $n \in N_g(D_2)$ and $x = n \otimes \mathbf{e}_{i,j}$, then $x^* D_1 x \subseteq D_2$ and $x D_2 x^* \subseteq D_1$.  As $X_g = \clsp \{ p(n \otimes \mathbf{e}_{i,j})q : n \in N_g(D_2) \ \text{and}\ i, j \in \NN \}$, we have that $X$ is a graded imprimitivity bimodule satisfying~\eqref{eq:N spans X}.  Moreover, $X_e = p ((A_2)_e \otimes \Kk)q$ is an $(A_1)_e$--$(A_2)_e$-imprimitivity bimodule since $\varphi$ induces an isomorphism from $(A_1)_e \otimes \Kk$ to $(A_2)_e \otimes \Kk$.

\mbox{(1)\;$\implies$\;(2).} Suppose that $X$ is a graded $A_1$--$A_2$-imprimitivity bimodule satisfying~\eqref{eq:N spans X} and $X_e$ is an $(A_1)_e$--$(A_2)_e$-imprimitivity bimodule.  Then $D_2 \subseteq (A_2)_e = \langle X_e, X_e\rangle_{A_2}$. Let
\[
    N_{X_e} \coloneqq \{ x \in X_e : \langle  x , D_1  \cdot x \rangle_{A_2} \subseteq D_2 \quad \text{ and } _{A_1}\langle  x\cdot D_2 ,  x \rangle \subseteq D_1 \}.
\]
We claim that $\clsp\{\langle x, x\rangle_{A_2} : x \in N_{X_e}\} = D_2$.

To see this, first note that since $D_2 \subseteq (A_2)_e = \langle X_e, X_e\rangle_{A_2}$, Equation~\eqref{eq:N spans X} implies that $\clsp\{\langle x, y\rangle_{A_2} : x \in N_{X_e}\} = D_2$. In particular, identifying $D_2$ with $C_0(\widehat{D}_2)$, the functions $\{\langle x, y\rangle_{D_2} : x,y \in N_{X_e}\}$ vanish nowhere and separate points, and therefore the functions $\langle x, y\rangle_{A_2}^*\langle x, y\rangle_{A_2}$ have the same property. So it suffices to show that each $\langle x, y\rangle_{A_2}^*\langle x, y\rangle_{A_2}$ has the form $\langle z, z\rangle_{A_2}$ for some $z \in N_{X_e}$. To see this, we first calculate, using the imprimitivity condition at the third inequality:
\begin{align*}
\langle x, y\rangle_{A_2}^*\langle x, y\rangle_{A_2}
    &= \langle y, x\rangle_{A_2}\langle x, y\rangle_{A_2} \\
    &= \big\langle y, x\cdot\langle x, y\rangle_{A_2}\big\rangle_{A_2}\\
    &= \big\langle y, {_{A_1}\langle x, x\rangle}\cdot y\rangle_{A_2}\\
    &= \big\langle {_{A_1}\langle x, x\rangle}^{1/2}\cdot y, {_{A_1}\langle x, x\rangle}^{1/2}\cdot y\rangle_{A_2}.
\end{align*}
Since $x \in N_{X_e}$, we have $d \coloneqq {_{A_1}\langle x, x\rangle}^{1/2} \in D_1 \subseteq (A_1)_e$, so $d\cdot y \in N_{X_e}$.

Lemma~\ref{lem:nice frame CStar version} applied to $X_e$ therefore yields a sequence $(x_i)$ in $X_e$ satisfying Condition~2 of \cite[Definition~3.1]{KM-TAMS2018}. The same argument applied to the conjugate module $X^*_e$ shows that $X_e$ contains a sequence $(y_i)$ satisfying Condition~(3) of \cite[Definition~3.1]{KM-TAMS2018}.

Let $A \coloneqq A_1 \oplus X \oplus X^* \oplus A_2$ be the linking algebra of $X$ and let $D = D_1 \oplus D_2 \subseteq A$. Endow $A$ with the grading inherited from those on $A_1$, $A_2$ and $X$. By \cite[Proposition~4.6]{KM-TAMS2018}, there exist isometries $V_1, V_2 \in \Mm(A_e \otimes \Kk)$, regarded as a subset of $\Mm(A \otimes \Kk)$, satisfying the conditions of that proposition. Since the $V_i$ are zero-graded, conjugation by $V_i$ is a diagonal-preserving graded isomorphism from $A_i \otimes \Kk$ to $A \otimes \Kk$. Composing these isomorphisms gives the desired isomorphism of graded stabilisations.
\end{proof}

\begin{cor}
  Let $A_1$ and $A_2$ be separable \sat graded $C^*$-algebras. For each $i$, let $D_i$ be an abelian subalgebra of $A_i$ that contains an approximate identity for $A_i$ and satisfies $A_i = \operatorname{\overline{\text{span}}}N_*(D_i)$.  Suppose that the $D_i$ have totally disconnected spectra. Then there exists a graded imprimitivity $A_1$--$A_2$-bimodule $X$ such that
  \begin{equation*}
  X_g = \clsp\{ x \in X_g :  {}_{A_1} \langle x D_2, x \rangle \subseteq D_1 , \langle x , D_1 x \rangle_{A_2} \subseteq D_2\}
  \end{equation*}
  for all $g \in \Gamma$ if and only if there is a graded isomorphism between $A_1 \otimes \Kk^{\operatorname{uni}}$ and $A_2 \otimes \Kk^{\operatorname{uni}}$ that preserves the Cartan subalgebras $D_1 \otimes \mathcal{C}$ and $D_2 \otimes \mathcal{C}$.
\end{cor}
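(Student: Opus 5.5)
The plan is to reduce the corollary to Theorem~\ref{thm-cartan-imprimitivity}, in the same way that Corollary~\ref{cor-homogeneous for free} was deduced from Theorem~\ref{thm-BGR}. The only hypothesis of Theorem~\ref{thm-cartan-imprimitivity}(1) missing from the corollary is that $X_e$ be an $(A_1)_e$--$(A_2)_e$-imprimitivity bimodule. So the main task is to show that saturation of the gradings forces this automatically.

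For the ``if'' direction, suppose $\varphi$ is a graded, diagonal-preserving isomorphism $A_1 \otimes \Kk^{\operatorname{uni}} \to A_2 \otimes \Kk^{\operatorname{uni}}$. Then the implication (2)$\implies$(1) of Theorem~\ref{thm-cartan-imprimitivity} applies verbatim. It produces $X = p(A_2 \otimes \Kk)q$ with the required spanning property~\eqref{eq:N spans X}, and this is more than the corollary asks for. Saturation is not needed here; we only need that saturated graded algebras are in particular topologically graded.

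For the ``only if'' direction, let $X$ be a graded $A_1$--$A_2$-imprimitivity bimodule satisfying the displayed spanning condition. I would cite Corollary~\ref{cor-homogeneous for free} to conclude that $X_e$ is an $(A_1)_e$--$(A_2)_e$-imprimitivity bimodule. That corollary is stated for $\sigma$-unital algebras, which is fine because separable $C^*$-algebras are $\sigma$-unital. Its proof uses only strongness of the gradings, Lemma~\ref{lem-strgr-approx-id}, and fullness of $X$; it makes no use of the Cartan data. With this in hand, both conditions in Theorem~\ref{thm-cartan-imprimitivity}(1) hold, and (1)$\implies$(2) of that theorem gives the desired graded isomorphism of univalent stabilisations preserving $D_1 \otimes \mathcal{C}$ and $D_2 \otimes \mathcal{C}$.

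The main point needing care is that the hypotheses of Theorem~\ref{thm-cartan-imprimitivity} match those of the corollary exactly. The corollary speaks of ``Cartan subalgebras'', but all the theorem uses is that each $D_i$ is abelian, contains an approximate identity for $A_i$, spans $A_i$ via homogeneous normalizers, and has totally disconnected spectrum. All of these are assumed in the corollary. I would also check that Corollary~\ref{cor-homogeneous for free} gives fullness of the inner products on $X_e$ in the precise sense $\clsp\langle X_e, X_e\rangle_{A_2} = (A_2)_e$ and $\clsp {}_{A_1}\langle X_e, X_e\rangle = (A_1)_e$. This is exactly what that proof establishes, via the inclusion $B_e \subseteq \clsp E(\langle X_e, X_e\rangle)$, and it is all the imprimitivity condition on $X_e$ requires.
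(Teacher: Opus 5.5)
Your proposal is correct and follows essentially the same route as the paper. Both arguments use Corollary~\ref{cor-homogeneous for free}, which applies since separable algebras are $\sigma$-unital and saturated gradings are strong and topological, to see that $X_e$ is automatically an $(A_1)_e$--$(A_2)_e$-imprimitivity bimodule, and then read off both directions from Theorem~\ref{thm-cartan-imprimitivity}.
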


\begin{proof}
If $X$ is a graded imprimitvity $A_1$--$A_2$-bimodule, then Theorem~\ref{cor-homogeneous for free} implies that $X_e$ is an $(A_1)_e$--$(A_2)_e$-imprimitivity module as $A_1$ and $A_2$ are saturated graded.  The corollary now follows from Theorem~\ref{thm-cartan-imprimitivity}.
\end{proof}

\end{document}